\newtheorem{proposition}{Proposition}
\newtheorem{lemma}{Lemma}
\newtheorem{theorem}{Theorem}
\newtheorem*{remark}{Remark}
\DeclareMathOperator*{\argmin}{\textrm{argmin}\,} 
\def\Diag            {\textrm{Diag}} 	 
\def\prox            {\mathbf{P}}     
\def\rk              {\textrm{rank}}     
\def\st              {\,\textrm{s.t.}\,} 
\def\zeros           {\bm{0}}            
\def\IN {\mathbb{N}} 		
\def\IR  {\mathbb{R}} 	    
\def\IRm {\IR^{m}} 			
\def\IRn {\IR^{n}} 			
\def\IRmn{\IR^{m \times n}} 
\def\IRmr{\IR^{m \times r}} 
\def\IRrn{\IR^{r \times n}} 
\def\A {\bm{A}}
\def\D {\bm{D}}
\def\G {\bm{G}}
\def\L {\bm{L}}
\def\M {\bm{M}}
\def\N {\bm{N}}
\def\P {\bm{P}}
\def\S {\bm{S}}
\def\U {\bm{U}}
\def\V {\bm{V}}
\def\W {\bm{W}}
\def\X {\bm{X}} 
\def\Y {\bm{Y}}
\def\d {\bm{d}}
\def\s {\bm{s}}
\def\u {\bm{u}}
\def\v {\bm{v}}
\def\x {\bm{x}} 
\def\y {\bm{y}}
\def\z {\bm{z}}
\def\cP { \mathcal{P}}
\def\bSigma   {\boldsymbol{\Sigma}}
\def\bxi      {\boldsymbol{\xi}}
\title{Binno: A 1st-order method for Bi-level Nonconvex Nonsmooth Optimization for Matrix Factorizations}
{
\author{\small 
Laura Selicato \vspace{-3ex}
\\ \small 
National Research Council (CNR), Water Research Institute (IRSA), Italy \vspace{-3ex}
\and \small 
Flavia Esposito \vspace{-3ex}
\\  \small 
University of Bari Aldo Moro, Department of Mathematics, Bari, Italy \vspace{-3ex}
\and \small 
Andersen Ang \vspace{-3ex}
\\ \small 
School of Electronics and Computer Science, University of Southampton, UK
} \vspace{-3em}
}
\begin{document}
\maketitle \vspace{-3ex}

\begin{abstract}
{Nonconvex and nonsmooth bi-level optimization poses critical theoretical challenges, while arising in several applications. In this work, we} develop a method for nonconvex, nonsmooth bi-level optimization and introduce \texttt{Binno}, a {first-order} method {that builds on proximal-gradient updates within the the proximal alternate minimization framework with descent conditions from variational analysis.}
{\texttt{Binno}} couples the two {levels via} a descent-driven averaging mechanism, extending {single-level} proximal schemes to the nonconvex {nonsmooth} bi-level setting.
We {show} that\texttt{Binno} induces a descent property for a suitable surrogate of the bi-level objective.
Each iteration performs blockwise proximal-gradient updates for the upper and lower problems separately{,} then forms a calibrated, block-diagonal convex combination of the two iterates.
A linesearch selects {combination} weights {enforcing} simultaneous descent of both {objectives. 
We} {give} conditions {ensuring that both these weights and the} descent directions induced by the associated proximal-gradient maps {exist}.
We apply \texttt{Binno} {to} sparse low-rank factorization, where the upper level uses elementwise $\ell_1$ penalties and the lower level uses nuclear norms, coupled via a Frobenius data term.
{We test \texttt{Binno} on synthetic matrices and a real traffic-video dataset, achieving lower reconstruction error and higher peak signal-to-noise ratio than standard methods. We also validate it on a regularized market-clearing problem, where it selects policy-preferred equilibria, and compare it with bi-level baseline, showing consistent improvements.}

\noindent
\textbf{Keywords}: Bi-level Optimization, Optimization Algorithm
, Matrix Factorization , Nonconvex Optimization 
\\
MSC2008: 65K10, 90C26, 90C30
\end{abstract}

\tableofcontents

\section{Introduction}\label{sec:intro}
Bi-level optimization problems consist of two nested { tasks} in a hierarchical structure.
The {upper level} determines variables that influence the {lower level}.
A solution {is optimal for both levels} under {this} hierarchy: the {upper-level} decision anticipates {the lower-level} optimal response.
{These problems  arise naturally} in decision science and learning{, where two optimization processes are coupled} \cite{dempe2020bilevel,del2023bi, del2025penalty}.
In this paper, we { focus on} the following bi-level problem:
\begin{equation}\label{eq:Binno}
\begin{array}{l}
\displaystyle
\argmin_{\x\in \IRn, \y \in \IRm}
\Big\{ 
\psi_1(\x,\y) \coloneqq  f_1(\x) + g_1(\y) + H(\x,\y)
\Big\}
\\
\displaystyle 
\quad \st~ (\x,\y) \in {\argmin_{\u\in \IRn, \v \in \IRm}
\Big\{ 
\psi_2(\u,\v) \coloneqq  f_2(\u) + g_2(\v) + H(\u,\v)
\Big\}},
\end{array}
\end{equation}
{where the upper and lower level problems, {denoted by} functions with subscript $i=1,2$, {minimize} nonconvex nonsmooth functions $\psi_i: \IRn\times\IRm\to(-\infty,+\infty]$
with
\begin{itemize}[leftmargin=15pt,itemsep=0pt]
\item $f_i:\IRn\to(-\infty,+\infty]$ and $g_i:\IRm\to(-\infty,+\infty]$ convex, proper and lower semicontinuous (l.s.c.);
\item $H:\IRn\times\IRm\to\IR$ {of class} $C^1$ block-wise ({i.e., in one variable at a time}).
\end{itemize}}
\noindent
{In} \eqref{eq:Binno}, the lower level problem has the same structure as the upper {level,} with $f_2 \neq f_1$ and $g_2 \neq g_1$.
Problem\eqref{eq:Binno} with $\psi_2 = 0$, {i.e., a single level,} frequently arises in machine {learning,} where {regularizers $f_1$ and $g_1$ encode constraints {on} variables $\x$ and $\y$.}
A classic example arises when these functions {are} indicator functions of half-space constraints \cite{combettes2011proximal,parikh2014proximal}.

{Classical methods for bi-level optimization include gradient-based implicit differentiation, hypergradient methods, and approximation schemes \cite{beck2023survey}. These approaches typically require smoothness of the lower-level problem or strong regularity assumptions.}
For convex inner problems and strongly convex outer objectives, first-order bi-level methods with provable { convergence} rates exist.
An {example} is the Sequential Averaging Method (SAM) and its bi-level specialization BiG-SAM, which {treat} bi-level optimization as a fixed-point selection problem and average a proximal-gradient step for the inner composite with a gradient step for the outer objective;
sublinear $O(\tfrac{1}{k})$ rates are known under standard Lipschitz/strong-convexity assumptions \cite{sabach2017first}.
Recent variants relax projections, incorporate inertial or conditional-gradient updates, or target ``simple" convex bi-level problems \cite{cao2024accelerated,giang2024projection}.
{There are also works that recently try to address bi-level formulations with nonsmooth inner problems\cite{alcantara2026theoretical,okuno2021lp}. 
However, these approaches typically focus on specific structures and do not address the general setting considered here, where both levels may involve nonsmooth and nonconvex components. Then, these techniques are not directly applicable to our setting, where both upper and lower levels involve nonsmooth composite structures and distinct regularizers, leading to potentially conflicting descent directions.} Two {gaps remain:} (i) most analyses assume convexity and smoothness at the outer level; (ii) existing SAM-type schemes average one upper-level step with one inner fixed-point map, while block-structured, composite, possibly nonconvex models with explicit proximal treatment at both levels have received less attention.

\subsection{Contribution and Paper Organization}
{We propose \texttt{Binno} (Bi-level nonconvex nonsmooth optimization), a bi-level generalization of Proximal Alternating
Linearized Minimization (PALM) to solve Problem \eqref{eq:Binno}.}
Motivated by PALM for nonconvex, nonsmooth single-level composites \cite{bolte2014proximal}, { \texttt{Binno} is a} bi-level generalization that {applies} blockwise proximal-gradient updates {at} both levels and then forms a calibrated convex combination of the upper- and lower-driven iterates to steer the sequence toward an upper-level preferred solution within the lower-level solution set.
This design preserves the modularity and per-block simplicity of proximal methods while embedding bi-level guidance directly into the iteration.

{The paper is organized as follow. Section~\ref{sec:background} provides notations and auxiliary results.}
{Section~\ref{sec:binno} details \texttt{Binno} and its theoretical properties.}
{Section~\ref{sec:SLR} applies \texttt{Binno} to sparse low-rank matrix factorization\footnote{{Throughout this paper, we use the term 'matrix factorization' to refer to the problem of approximating a given matrix through a low-rank factorized representation, which is the primary focus of our bi-level optimization method.}}.}
{Section~\ref{sec:exp} presents numerical experiments comparing \texttt{Binno} to standard methods on synthetic and real datasets.
Section~\ref{sec:RMC} presents numerical experiments comparing \texttt{Binno} to market clearing economics on real dataset.
}

\section{Background}\label{sec:background}
This work relies on the mathematical tools detailed below.

\paragraph{Proximal gradient update}
{For optimization problems of the form} \eqref{eq:Binno} with $\psi_2=0$, the proximal gradient method~\cite{martinet1970breve,combettes2011proximal,beck2017first} is {widely used}.
{It solves problems of the form}
\begin{equation}\label{opt:proxgrad}
\argmin_{\x \in \IRn} p(\x) + q(\x),
\end{equation}
with $p:\IRn \to \IR$ convex, $L$-smooth (gradient is $L$-Lipschitz) and $q :\IRn \to \IR \cup \{+\infty\}$ convex, proper and l.s.c..
At iteration $k \in \IN$, the proximal gradient update is $\x_{k+1} = \prox_{q}^\nu \big(  \x_k - \nu \nabla p(\x_k) \big)$
where $\nu >0$ is {the stepsize} and $\prox_{q}^\nu : \IRn \to \IRn$ is the proximal operator, defined as 
\[
\prox_{q}^\nu (\x) \coloneqq \argmin_{\bxi \in \IRn} 
\Big\{ q(\bxi) + \tfrac{1}{2\nu}\|\bxi-\x\|_2^2 \Big\},
\qquad
M_{\nu q} \coloneqq \min_{\bxi \in \IRn} 
\Big\{ q(\bxi) + \tfrac{1}{2\nu}\|\bxi-\x\|_2^2 \Big\},
\]
with $M_{\nu q}$ {the} Moreau envelope associated to $q$.
Under {suitable} assumptions on $p,q,\nu$ (see \cite{beck2017first}), the sequence $\{\x_k\}_{k \in \IN}$ {produced} converges to a global minimizer of {Problem}~\eqref{opt:proxgrad}.

The following lemma {will be useful}.
\begin{lemma}[Theorem~12.30, \cite{Bauschke2017}]\label{lem:M_grad_boounded}
If $f$ is convex, proper and l.s.c., then $M_{\nu f}\in C^1$ and for all $\x\in\IRn$, $\nabla M_{\nu f}=\frac{1}{\nu}(\x-\prox_{\nu f}(\x))$.
Consequently, its gradient is $\tfrac{1}{\nu}$ Lipschitz continuous.
\end{lemma}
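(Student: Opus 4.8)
Fix $\nu>0$. First I would record that $\prox_{\nu f}$ is well defined and single-valued: for each $\x\in\IRn$ the map $\bxi\mapsto f(\bxi)+\tfrac1{2\nu}\|\bxi-\x\|^2$ is proper, lower semicontinuous and $\tfrac1\nu$-strongly convex, hence has a unique minimizer, so the infimum defining $M_{\nu f}(\x)$ is attained exactly at $\prox_{\nu f}(\x)$ and $M_{\nu f}$ is finite everywhere. Next I would establish firm nonexpansiveness of the proximal map: from the optimality condition $\x-\prox_{\nu f}(\x)\in\nu\,\partial f\big(\prox_{\nu f}(\x)\big)$ and monotonicity of $\partial f$ one gets, for all $\x,\y\in\IRn$,
\[
\big\langle \prox_{\nu f}(\x)-\prox_{\nu f}(\y),\ \x-\y\big\rangle\ \ge\ \big\|\prox_{\nu f}(\x)-\prox_{\nu f}(\y)\big\|^2 ,
\]
and a parallel computation shows $\Id-\prox_{\nu f}$ is firmly nonexpansive as well; in particular both maps are $1$-Lipschitz.

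The heart of the argument is a two-sided estimate of the increment of $M_{\nu f}$. Fix $\x\in\IRn$, put $p\coloneqq\prox_{\nu f}(\x)$, and for $\h\in\IRn$ put $q\coloneqq\prox_{\nu f}(\x+\h)$. Using $p$ as a (generally suboptimal) competitor in the minimization defining $M_{\nu f}(\x+\h)$ and expanding the square gives
\[
M_{\nu f}(\x+\h)-M_{\nu f}(\x)\ \le\ \tfrac1\nu\big\langle \x-p,\ \h\big\rangle+\tfrac1{2\nu}\|\h\|^2 ,
\]
while using $q$ as a competitor in the minimization defining $M_{\nu f}(\x)$ and expanding yields
\[
M_{\nu f}(\x+\h)-M_{\nu f}(\x)\ \ge\ \tfrac1\nu\big\langle \x-p,\ \h\big\rangle+\tfrac1{2\nu}\|\h\|^2+\tfrac1\nu\big\langle p-q,\ \h\big\rangle .
\]
Subtracting $\tfrac1\nu\langle \x-p,\h\rangle$ throughout and using $\|p-q\|\le\|\h\|$ (nonexpansiveness of $\prox_{\nu f}$) together with Cauchy--Schwarz, the residual $M_{\nu f}(\x+\h)-M_{\nu f}(\x)-\tfrac1\nu\langle \x-p,\h\rangle$ is trapped between $-\tfrac1{2\nu}\|\h\|^2$ and $\tfrac1{2\nu}\|\h\|^2$, hence is $o(\|\h\|)$ as $\h\to\zeros$. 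Therefore $M_{\nu f}$ is Fr\'echet differentiable at $\x$ with $\nabla M_{\nu f}(\x)=\tfrac1\nu\big(\x-\prox_{\nu f}(\x)\big)$.

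It then follows at once that $M_{\nu f}\in C^1$ with a $\tfrac1\nu$-Lipschitz gradient: since $\nabla M_{\nu f}=\tfrac1\nu(\Id-\prox_{\nu f})$ and $\Id-\prox_{\nu f}$ is (firmly) nonexpansive, hence $1$-Lipschitz, we get $\|\nabla M_{\nu f}(\x)-\nabla M_{\nu f}(\y)\|\le\tfrac1\nu\|\x-\y\|$ for all $\x,\y$, which in particular gives continuity of $\nabla M_{\nu f}$.

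I expect the main obstacle to be the derivation of the two matching quadratic bounds — getting the signs of the cross terms right and checking that the leftover $\tfrac1{2\nu}\|\h\|^2$ terms are genuinely negligible, so that the sandwich upgrades a directional estimate to full Fr\'echet differentiability; the remaining pieces are a routine optimality-condition/monotonicity computation and a one-line consequence of nonexpansiveness. A more abstract alternative would be to note that $M_{\nu f}=\big(f^{\ast}+\tfrac\nu2\|\cdot\|^2\big)^{\ast}$ (infimal-convolution/conjugation identity), so $M_{\nu f}$ is the conjugate of a $\nu$-strongly convex function and is therefore $C^1$ with $\tfrac1\nu$-Lipschitz gradient by the strong-convexity/smoothness duality; one would still identify $\nabla M_{\nu f}$ with $\tfrac1\nu(\Id-\prox_{\nu f})$ via subdifferential inversion.
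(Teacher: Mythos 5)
Your proposal is correct, but note that the paper offers no proof of this lemma at all: it is invoked as a known result (Theorem~12.30 of Bauschke--Combettes) and used as a black box in Lemma~\ref{lem:g}. What you have written is essentially the standard textbook argument, and every step checks out. The upper estimate follows from using $p=\prox_{\nu f}(\x)$ as a suboptimal competitor in the minimization defining $M_{\nu f}(\x+\h)$, the lower estimate from using $q=\prox_{\nu f}(\x+\h)$ as a competitor in the minimization defining $M_{\nu f}(\x)$ together with the exactness of $q$ for $M_{\nu f}(\x+\h)$; the cross-term signs are right, and the bound $\|p-q\|\le\|\h\|$ (nonexpansiveness, which you correctly derive from the optimality condition $\x-\prox_{\nu f}(\x)\in\nu\,\partial f(\prox_{\nu f}(\x))$ and monotonicity of $\partial f$) turns the sandwich into $\bigl|M_{\nu f}(\x+\h)-M_{\nu f}(\x)-\tfrac1\nu\langle\x-p,\h\rangle\bigr|\le\tfrac1{2\nu}\|\h\|^2=o(\|\h\|)$, which is genuine Fr\'echet differentiability, not merely a directional statement. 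The Lipschitz constant $1/\nu$ then follows from firm nonexpansiveness of $\Id-\prox_{\nu f}$, exactly as you say. Your alternative route via $M_{\nu f}=\bigl(f^{*}+\tfrac{\nu}{2}\|\cdot\|^{2}\bigr)^{*}$ and strong-convexity/smoothness duality is also valid (the biconjugation step is justified since $M_{\nu f}$ is finite-valued and convex), and is closer in spirit to how the cited reference organizes the material; either version would serve as a self-contained replacement for the paper's citation. The only cosmetic mismatch is notational: the paper writes $\prox_{\nu f}$ here but $\prox_{f}^{\nu}$ elsewhere, and your argument is indifferent to this.
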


{We denote by $\G$ the proximal gradient map associated with~\eqref{opt:proxgrad}:} {letting} $\partial q$ {denote} a subgradient of {$q$},
\[
\begin{array}{rclcl}
\x_+ 
&=& 
\prox_q^\nu( \x - \nu \nabla p(\x))
&=& 
\x - \nu \G(\x)
\\[1mm]
\G(\x)
&=&
\frac{1}{\nu}\big(
    \x - \prox_q^\nu( \x - \nu \nabla p(\x))
    \big)
&\in& 
\nabla p(\x) + \partial q(\x - \nu \G(\x)  ).
\end{array}
\]

\paragraph{Proximal Alternating Linearized Minimization (PALM)}
{Given} the structure of each level of {Problem}~\eqref{eq:Binno}, we recall {the} PALM algorithm~\cite{bolte2014proximal}{, which extends the proximal gradient update to the two-block problem}
\[
\argmin_{\x \in \IRn, \y \in \IRm} f(\x) + g(\y) + H(\x,\y),
\]
{obtained by dropping the bi-level structure in~\eqref{eq:Binno}; $f,g$ satisfy the same assumptions as $q$ in~\eqref{opt:proxgrad} and $H\in C^1$.}
{PALM applies the proximal gradient update alternately on each subproblem of} \eqref{eq:Binno}:
\[
\x_{k+1} = \prox_{f}^\nu \big(  \x_k - \nu \nabla_{\x} H(\x_k,\y_k) \big),
\qquad
\y_{k+1} = \prox_{g}^\nu \big(  \y_k - \nu \nabla_{\y} H(\x_{k+1},\y_k) \big).
\]
{The sequence generated by PALM {converges} to a critical point of the objective {provided, it satisfies} the Kurdyka-Lojasiewicz (KL) property and the partial gradients $\nabla_{\x}H$ and $\nabla_{\y}H$ are globally Lipschitz.}

\paragraph{Sequential Averaging Method (SAM)}
{SAM targets Problem~\eqref{opt:proxgrad} in a bi-level setting: among its minimizers it selects the one solving an outer objective. Writing the inner problem with $f,g$ in place of $p,q$,}
\begin{equation}\label{prob:SAM}
\min_{\x} \big\{ \varphi(\x)=f(\x)+g(\x) \big\}, 
\end{equation}
{the outer problem is $\min_{\x\in X^\star}\omega(\x)$ with $\omega$ strongly convex, smooth, and $\nabla\omega$ $L_\omega$-Lipschitz, where $X^\star$ is the solution set of~\eqref{prob:SAM}.
Given} a nonexpansive map $T:\IRn\to\IRn$ and a contraction $S:\IRn\to\IRn$, SAM iterates
\[\textstyle
\x_k = \alpha_kS(\x_{k-1}) + (1-\alpha_k)T(\x_{k-1}),\quad \text{with} 
~
\alpha_k\in(0,1],
~
\alpha_k\downarrow0,
~
\sum_k\alpha_k=\infty,
\]
and converges to a point $\x^\star\in\operatorname{Fix}(T)
\coloneqq  \{ \x \in \IRn :  T(\x) = \x\}$ {satisfying} the variational inequality
$\langle \x^\star - S(\x^\star), \x - \x^\star\rangle \ge 0$ for all $\x\in\operatorname{Fix}(T)$.
{A choice is to take $T$ as the proximal gradient operator of the inner problem and $S$ as the gradient-step contraction for the outer problem: $T(\x)= \x - t\G(\x) = \prox^t_{g}\bigl(\x - t\nabla f(\x)\bigr)$ with $t\in(0,\tfrac{1}{L_f}]$.
The operator $T$ is nonexpansive with $\operatorname{Fix}(T)=X^\star$; while $S(\x)=\x - s\nabla \omega(\x)$ with $s\in\big(0,\tfrac{2}{L_\omega+\sigma}\big]$. With these, SAM reads}
{
\[
\z_k = \x_{k-1}-s\nabla \omega(\x_{k-1}),
\qquad
\x_k = \alpha_k \z_k + (1-\alpha_k)
\prox^t_g\bigl(\x_{k-1}-t\nabla f(\x_{k-1})\bigr),
\]
}
and the iterates converge to $\x^\star\in X^\star$ solving the bi-level task via the first-order optimality condition
$\langle \nabla\omega(\x^\star), \x - \x^\star\rangle \ge 0$ for all $\x\in X^\star$.

{The above concepts are the building blocks for \texttt{Binno}:  PALM-type updates are used within the averaging mechanism of SAM.}

\section{Binno}\label{sec:binno}
The idea of \texttt{Binno} is to {alternately update each variable block by approximately solving each single-level subproblem via a proximal gradient update, and then taking a convex combination of the two resulting sequences}. {\texttt{Binno} can be seen as a PALM-based update embedded in a SAM-type averaging scheme, where two blockwise proximal-gradient dynamics are coupled through a convex combination designed to enforce bi-level descent.
Unlike SAM, which relies on a fixed-point averaging between a contraction and a nonexpansive map {for} a single objective, \texttt{Binno} operates on two coupled nonconvex objectives and uses a data-dependent convex combination of two PALM-generated dynamics to enforce simultaneous descent at both levels.}
{Specifically}, starting from an initial guess $(\x_0,\y_0)$, at each iteration $k$
we perform one PALM step on the upper level problem, with {subscript} $u$, {obtaining} $(\x_u, \y_u)$.
Similarly, we perform one PALM step on the lower level problem ({ with subscript} $\ell$), obtaining $(\x_{\ell}, \y_{\ell})$.
{We then obtain} $(\x_{k+1},\y_{k+1})$ {via} a convex combination of $(\x_u,\y_u)$ and $(\x_{\ell},\y_{\ell})$.
Fig.\ref{fig:flow_chart} {shows} a flow chart of the {iteration}, highlighting some issues that emerge ({indicated as $Q_1$ and $Q_2$}).
We let $\tilde{\x}$ be the gradient-only update of $\x$ (i.e., before applying the prox operator).
Performing a simply convex combination, like
in SAM, is not appropriate in this setting.
{The following issues arise}:
\begin{enumerate}[leftmargin=15pt,itemsep=0pt]
\item {Since} $f_1\neq f_2$, the upper- and lower-level {updates} of $\x$ target different proximal maps, so $\x_u$ and $\x_{\ell}$ {generally point to} distinct fixed points.
This {complicates both the analysis and the effect of averaging them}.

\item {Since} $g_1\neq g_2$, the updates of $\y$ are {computed} from different $\x$ iterates ($\y_u$ uses $\x_u$ while $\y_{\ell}$ uses $\x_{\ell}$).
This cross-level coupling {yields} potentially conflicting descent directions and {intricate dynamics} for $\y$.
\end{enumerate}

\begin{figure}[h!]
\centering 
\begin{tikzpicture}{scale=1}
    \node (y_k) at (-2,0) {$\y_k$};
    \node (x_k) at (-2,1) {$\x_k$};
    \node (x_tile) at (0,1) {$\tilde{\x}$};
    \node (x_u) at (2,1) {$\x_u$};
    \node (y_tile) at (2,0) {$\tilde{\y}$};
    \node (y_u) at (8,0) {$\y_u$};
    
    \draw[->,very thick] (x_k) -- (x_tile) node[midway, above] {$\nabla_{\x}H$};
    \draw[->,very thick] (x_tile) -- (x_u) node[midway, above] {$\prox_{f_1}$};
    \draw[->,very thick] (y_k) -- (x_tile);
    \draw[->,very thick] (y_k) -- (y_tile);
    \draw[->,very thick] (x_u) -- (y_tile) node[midway, right] {$\nabla_{\y}H$};
    \draw[->,very thick] (y_tile) -- (y_u) node[midway, above, yshift =-1mm] {$\prox_{g_1}$};

    \node (y_kl) at (-2,-2) {$\y_k$};
    \node (x_kl) at (-2,-1) {$\x_k$};
    \node (x_tilel) at (0,-1) {$\tilde{\x}$};
    \node (x_l) at (2,-1) {$\x_{\ell}$};
    \node (y_tilel) at (2,-2) {$\tilde{\y}$};
    \node (y_l) at (8,-2) {$\y_{\ell}$};

    \draw[->,very thick] (x_kl) -- (x_tilel) node[midway, above] {$\nabla_{\x}H$};
    \draw[->,very thick] (x_tilel) -- (x_l) node[midway, above] {$\prox_{f_2}$};
    \draw[->,very thick] (y_kl) -- (x_tilel);
    \draw[->,very thick] (y_kl) -- (y_tilel);
    \draw[->,very thick] (x_l) -- (y_tilel) node[midway, right] {$\nabla_{\y}H$};
    \draw[->,very thick] (y_tilel) -- (y_l) node[midway, above, yshift =-1mm] {$\prox_{g_2}$};

    \node (y_k1) at (10,-2) {$\y_{k+1}$};
    \node (x_k1) at (10,1) {$\x_{k+1}$};
    \draw[->,very thick, dashed] (x_l) -- (x_k1) node[pos=0.75, above] {$Q_1$};
    \draw[->,very thick, dashed] (x_u) -- (x_k1) node[pos=0.75, above] {$Q_1$};
    \draw[->,very thick, dashed] (y_l) -- (y_k1) node[midway, above] {$Q_2$};
    \draw[->,very thick, dashed] (y_u) -- (y_k1) node[midway, above] {$Q_2$};
\end{tikzpicture}
\caption{The structure of \texttt{Binno}.
$Q_1$ and $Q_2$ {denote the issues of the problem solved with} the \texttt{Binno}-specific update, see \eqref{eq:convexcomb}.
}
\label{fig:flow_chart}
\end{figure}

\subsection{Theory of \texttt{Binno}}\label{proposed_approach}

At each iteration $k$, we {compute two PALM-type updates, one for the upper level and one for the lower level, and then {combine} them through a convex averaging step in the spirit of SAM.
This hybrid PALM-SAM mechanism enforces simultaneous descent of both objectives and {is the key distinction from} a simple repeated application of PALM.
{The steps are as follows}.}
\begin{enumerate}[leftmargin=15pt,itemsep=0pt]
    \item At the upper level subproblem, we perform a proximal gradient descent (ProxGrad) step on $\x_k$ {with} $\y_k$ {held fixed}
    \begin{equation}\label{eq:upperx}
    \x_u = \prox_{f_1}^\nu \big(
    \x_k - \nu \nabla_{\x}H(\x_k,\y_k)
    \big),
    \end{equation}
    where $\nu > 0$ is a stepsize and $\prox_{f_1}^\nu$ is the prox operator of {$f_1$} under parameter $\nu$.
    {Since this step is at the upper level, we label it $\x_u$.}

    \item At the upper level subproblem, we perform a ProxGrad step on $\y_k$ {with} $\x$  held fixed at the most recent value $\x_u$
    \begin{equation}\label{eq:uppery}
    \y_u = \prox_{g_1}^\nu \big(
    \y_k - \nu \nabla_{\y}H(\x_u,\y_k)
    \big).
    \end{equation}

    \item At the lower level subproblem, we perform a ProxGrad step on $\x_k$ {with} $\y_k$ held {fixed}
    \begin{equation}\label{eq:lowerx}
    \x_{\ell} = \prox_{f_2}^\nu \big(
    \x_k - \nu \nabla_{\x}H(\x_k,\y_k)
    \big).
    \end{equation}    

    \item At the lower level subproblem, we perform a ProxGrad step on $\y_k$ {with} $\x$ held fixed at the most recent value $\x_{\ell}$
    \begin{equation}\label{eq:lowery}
    \y_{\ell} = \prox_{g_2}^\nu \big(
    \y_k - \nu \nabla_{\y}H(\x_{\ell},\y_k)
    \big).
    \end{equation}

    \item We obtain {$(\x_{k+1},\y_{k+1})$ by a} convex combination of $(\x_u,\y_u)$ and $(\x_{\ell},\y_{\ell})$:
   \begin{equation}\label{eq:convexcomb}
    \begin{pmatrix}
    \x_{k+1} 
    \\
    \y_{k+1}
    \end{pmatrix}
    =
    \begin{pmatrix}
    \alpha_k I_n & \zeros 
    \\
    \zeros & \beta_k I_m
    \end{pmatrix}
    \begin{pmatrix}
    \x_u
    \\
    \y_u
    \end{pmatrix}
    +
    \begin{pmatrix}
    (1-\alpha_k) I_n & \zeros 
    \\
    \zeros & (1-\beta_k) I_m
    \end{pmatrix}
    \begin{pmatrix}
    \x_{\ell}
    \\
    \y_{\ell}
    \end{pmatrix},
    \end{equation}
     {where $I_n$, $I_m$ are identity matrices,}
    and $\alpha_k, \beta_k$ are {constants to be chosen},

\end{enumerate}

Algorithm~\ref{alg:binno} summarizes {these steps in pseudo-code}.

\begin{algorithm}[h!]\label{alg:binno}
\DontPrintSemicolon
\For{$k=0,1,...,$}  
{ 
\textbf{Upper-level update:} $(\x_u,\y_u) \overset{\eqref{eq:upperx}, \eqref{eq:uppery}}{\leftarrow}  (\x_k,\y_k)$ 

\textbf{Lower-level update:} $(\x_\ell,\y_\ell) \overset{\eqref{eq:lowerx}, \eqref{eq:lowery}}{\leftarrow} (\x_k,\y_k)$.

\textbf{Convex combination:} $(\x_{k+1},\y_{k+1}) \overset{\eqref{eq:convexcomb}}{\leftarrow} (\x_u,\y_u),(\x_\ell,\y_\ell)$.
}
\caption{\texttt{Binno} for Problem\eqref{eq:Binno} 
with initialization $\x_0, \y_0$}
\end{algorithm}
We use a line search scheme {to obtain} $\alpha_k, \beta_k$ such that {both} $\psi_1$ and $\psi_2$ {simultaneously descend}:
\[
\psi_1(\x_{k+1},\y_{k+1}) \leq \psi_1(\x_k,\y_k)
~~\text{ and }~~
\psi_2(\x_{k+1},\y_{k+1}) \leq \psi_2(\x_k,\y_k).
\]
\begin{remark}\label{remark:descent}
{The goal of the construction is to identify directions $\d_{\x}$ and $\d_{\y}$ that yield simultaneous descent for both the upper- and lower-level objectives. Since the proximal-gradient mappings associated with each level generally induce different descent directions, we construct $\d_{\x}$ and $\d_{\y}$ as convex combinations of such directions and enforce descent through suitable conditions on the combination parameters.}
Using the proximal gradient map (see \cref{sec:background}),
    \[
    \begin{array}{rcl}
    \x_{k+1}
    = \alpha \x_u + (1-\alpha) \x_{\ell}
    &=&
    \alpha  \prox_{f_1}^\nu \big(
    \x_k - \nu \nabla_{\x}H(\x_k,\y_k)
    \big) 
    + (1-\alpha) \prox_{f_2}^\nu \big(
    \x_k - \nu \nabla_{\x}H(\x_k,\y_k)
    \big)
    \\[1mm]
    &=& \alpha \big( \x_k - \nu \G^{u}_f(\x_k) \big) + (1-\alpha) \big( \x_k - \nu \G^{\ell}_f(\x_k) \big)
    \\[1mm]
    &=& \x_k - \nu \big( \alpha \G^{u}_f(\x_k)  + (1-\alpha) \G^{\ell}_f(\x_k) \big)
    \\[1mm]
    &=& \x_k + \nu \d_{\x} ~~\text{ with }~~ \d_{\x}= -\alpha \G^{u}_f(\x_k)  - (1-\alpha) \G^{\ell}_f(\x_k).
    \\[1mm]
    \y_{k+1}
    = \beta_k \y_u + (1-\beta_k) \y_{\ell}
    &=&
    \beta_k  \prox_{g_1}^\nu \big(
    \y_k - \nu \nabla_{\y}H(\x_u,\y_k)
    \big) 
    + (1-\beta_k) \prox_{g_2}^\nu \big(
    \y_k - \nu \nabla_{\y}H(\x_{\ell},\y_k)
    \big)
    \\[1mm]
    &=& \beta_k \big( \y_k - \nu \G^u_g(\y_k) \big) + (1-\beta_k) \big( \y_k - \nu \G^{\ell}_{g}(\y_k) \big)
    \\[1mm]
    &=& \y_k - \nu \big( \beta_k \G^u_g(\y_k)  + (1-\beta_k) \G^{\ell}_{g}(\y_k) \big)
    \\[1mm]
    &=& \y_k + \nu \d_{\y} ~~\text{ with }~~ \d_{\y}= -\beta_k \G^u_g(\y_k)  - (1-\beta_k) \G^{\ell}_{g}(\y_k).
    \end{array}
    \]
    where 
   {
  \[
   \begin{array}{ll}
   \G^{u}_f(\x_k) 
   = 
   \frac{1}{\nu}
   \big(\x_k-
   \prox_{f_1}^\nu \big(
    \x_k - \nu \nabla_{\x}H(\x_k,\y_k)
    \big)\big),
    & \qquad
   \G^{\ell}_f(\x_k)  
   =
    \frac{1}{\nu}
       \big(\x_k-
       \prox_{f_2}^\nu \big(
        \x_k - \nu \nabla_{\x}H(\x_k,\y_k)
        \big)\big),
    \\[1mm]
   \G^u_g(\y_k)  
   =
   \frac{1}{\nu}
   \big(\y_k-
   \prox_{g_1}^\nu \big(
    \y_k - \nu \nabla_{\y}H(\x_u,\y_k)\big)
    \big),
    & \qquad
   \G^{\ell}_{g}(\y_k)  
   =
   \frac{1}{\nu}
   \big(\y_k-
   \prox_{g_2}^\nu \big(
    \y_k - \nu \nabla_{\y}H(\x_{\ell},\y_k)\big)
    \big).
    \end{array}
    \]}
    {The proximal-gradient updates {for} the upper and lower problems generally induce different descent directions, {which need not guarantee} simultaneous decrease of both objectives. {We therefore impose that}} $\d_{\x}$ and $\d_{\y}$ are descent directions:
    \[
    \begin{array}{rll}
    \text{for upper level wrt $\x$ if} & \big\langle \partial \psi_1 (\x_k,\y_k), ~ \d_{\x} \big\rangle {<} 0
     &\iff~~
    \big\langle \partial f_1(\x_k) + \nabla_{\x}H(\x_k,\y_k), ~ \d_{\x} \big\rangle {<} 0,
    \\[1mm]
    \text{for upper level wrt $\y$ if} & \big\langle \partial \psi_1 (\x_u,\y_k), ~ \d_{\y} \big\rangle {<} 0
    &\iff~~
    \big\langle \partial g_1(\y_k) + \nabla_{\y}H(\x_u,\y_k), ~ \d_{\y} \big\rangle {<} 0,
    \\[1mm]
    \text{for lower level wrt $\x$ if} & \big\langle \partial \psi_2 (\x_k,\y_k), ~ \d_{\x} \big\rangle {<} 0
    &\iff~~
    \big\langle \partial f_2(\x_k) + \nabla_{\x}H(\x_k,\y_k), ~ \d_{\x} \big\rangle {<} 0,
    \\[1mm]
    \text{for lower level wrt $\y$ if} & \big\langle \partial \psi_2 (\x_{\ell},\y_k), ~ \d_{\y} \big\rangle {<} 0
    &\iff~~
    \big\langle \partial g_2(\y_k) + \nabla_{\y}H(\x_{\ell},\y_k), ~ \d_{\y} \big\rangle {<} 0.
 \end{array}
    \]
{Thus, to resolve $Q_1$, $Q_2$ in Fig.~\ref{fig:flow_chart}, we characterize conditions under which the combined directions $\d_{\x}$ and $\d_{\y}$ are simultaneous descent directions for both $\psi_1$ and $\psi_2$. This leads to the following theorem, which establishes the existence of $(\alpha_k, \beta_k)$ enforcing such a property.}
\end{remark}

{\begin{theorem}\label{th:descend_condition}
In the setting of Problem~\eqref{eq:Binno}, let all {the above} assumptions hold. Suppose {the} parameters $\alpha_k\in [0,1], \beta_k\in[0,1]$ satisfy conditions \eqref{conditions_alpha1}-\eqref{conditions_beta2}.
Then the {iterates produced by} \texttt{Binno} satisfy the descent properties
    \[
   \begin{array}{rcl}
    \big\langle \partial f_1(\x_k) + \nabla_{\x}H(\x_k,\y_k), ~~ \alpha_k \G^{u}_f(\x_k)  + (1-\alpha_k) \G^{\ell}_f(\x_k) \big\rangle > 0 
    \\[1mm]
    \big\langle \partial f_2(\x_k) + \nabla_{\x}H(\x_k,\y_k), ~~ \alpha_k \G^{u}_f(\x_k)  + (1-\alpha_k) \G^{\ell}_f(\x_k) \big\rangle > 0,
    \\[1mm]
        \big\langle \partial g_1(\y_k) + \nabla_{\y}H(\x_u,\y_k), ~~ \beta_k \G^u_g(\y_k)  + (1-\beta_k) \G^{\ell}_{g}(\y_k) \big\rangle > 0,
    \\[1mm]
    \big\langle \partial g_2(\y_k) + \nabla_{\y}H(\x_{\ell},\y_k), ~~ \beta_k \G^u_g(\y_k)  + (1-\beta_k) \G^{\ell}_{g}(\y_k) \big\rangle > 0.
   \end{array}
    \]
\end{theorem}}
{
\begin{remark}
Conditions \eqref{conditions_alpha1}-\eqref{conditions_beta2} define a feasible region for {$(\alpha_k,\beta_k)$}. {Although} the theorem is stated in conditional form, {this region is nonempty in practice under standard assumptions}, and the linesearch procedure is designed to identify admissible parameters at each iteration. For practical cases, see Section \ref{sec:SLR}.
\end{remark}}

To prove Theorem \ref{th:descend_condition}, we need some preliminary results.

\begin{lemma}\label{lem:nesterov}
Let $z$ be {a} convex, proper, l.s.c. function, and $\x_0 \in \text{int }(\text{dom }z)$.
Then $\partial z (\x_0)$ is a nonempty bounded set, i.e. there exists a constant $c$ such that $\|\partial z(\x_0)\|\leq c$.
\begin{proof}
{For} a closed convex function, $\partial z (x_0)$ is a nonempty bounded set \cite[Theorem 3.1.15]{Nesterov2018}.
A proper convex function is closed {iff} l.s.c., {the result follows}.
\end{proof}
\end{lemma}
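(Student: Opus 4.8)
The plan is to prove the two assertions—nonemptiness and boundedness—separately, with the common engine being the \emph{local Lipschitz continuity} of a proper convex function on the interior of its domain. First I would record that since $\x_0 \in \text{int}(\dom z)$, the function $z$ is bounded above near $\x_0$: choosing a small cube centred at $\x_0$ whose $2^n$ vertices all lie in $\dom z$, convexity gives $z \le M$ on the convex hull of those vertices, hence on a closed ball $B(\x_0, 2r)$ for some $r>0$ and finite $M$. A reflection argument then bounds $z$ below on the same ball: for $\|\u\|\le 2r$, convexity yields $z(\x_0) \le \tfrac12 z(\x_0+\u)+\tfrac12 z(\x_0-\u) \le \tfrac12 z(\x_0+\u)+\tfrac12 M$, so $z(\x_0+\u)\ge 2z(\x_0)-M$; thus $|z|\le M'$ on $B(\x_0,2r)$.

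From local boundedness I would derive local Lipschitz continuity on the smaller ball $B(\x_0,r)$: for $\x,\x' \in B(\x_0,r)$, extend the segment $[\x,\x']$ to a point on the sphere of radius $2r$, write one endpoint as a convex combination of the other endpoint and that far point, and invoke convexity to get $|z(\x)-z(\x')| \le c\|\x-\x'\|$ with $c = 2M'/r$. This conversion of a mere sup-bound into a modulus of continuity is the technical heart of the argument, and I expect it to be the main obstacle, since it is exactly where the geometry of convexity does the work.

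With continuity in hand, nonemptiness follows by a Hahn--Banach argument. The directional derivative $z'(\x_0;\d) \coloneqq \lim_{t\downarrow 0}\tfrac{z(\x_0+t\d)-z(\x_0)}{t}$ exists for every $\d$ by monotonicity of the difference quotients, is finite with $|z'(\x_0;\d)|\le c\|\d\|$ by the Lipschitz bound, and is sublinear (positively homogeneous and subadditive) in $\d$ by convexity. Applying the dominated-extension form of Hahn--Banach to the functional defined on the line $\IR\d$ by $t\d \mapsto t\,z'(\x_0;\d)$, dominated by the sublinear map $\u \mapsto z'(\x_0;\u)$, produces a vector $\g$ with $\langle \g,\u\rangle \le z'(\x_0;\u)$ for all $\u$; the convexity inequality $z(\x_0+\u)-z(\x_0) \ge z'(\x_0;\u) \ge \langle \g,\u\rangle$ then shows $\g \in \partial z(\x_0)$, so the set is nonempty. (Equivalently, since $z$ is lsc the epigraph $\epi z$ is closed and convex, and one separates the boundary point $(\x_0,z(\x_0))$ from $\epi z$, using interiority to rule out a vertical supporting hyperplane.)

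Finally, for boundedness, take any $\g \in \partial z(\x_0)$; if $\g=\zeros$ there is nothing to prove, so assume $\g\neq\zeros$ and set $\x = \x_0 + r\g/\|\g\| \in B(\x_0,r)$. The subgradient inequality gives $z(\x)-z(\x_0) \ge \langle \g,\, r\g/\|\g\|\rangle = r\|\g\|$, while the Lipschitz bound gives $z(\x)-z(\x_0) \le c\|\x-\x_0\| = cr$. Combining these yields $\|\g\|\le c$, so $\partial z(\x_0)$ is contained in the ball of radius $c$ and is therefore bounded, which completes the proof with the explicit constant $c = 2M'/r$.
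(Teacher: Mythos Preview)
Your argument is correct and self-contained: local boundedness from convexity on a cube, the reflection trick for a lower bound, the standard upgrade to local Lipschitz continuity with constant $c=2M'/r$, existence of a subgradient via Hahn--Banach applied to the sublinear directional derivative, and the bound $\|\g\|\le c$ from the subgradient inequality together with the Lipschitz estimate. Each step is standard and carried out cleanly.

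The paper, by contrast, does not prove anything here: it simply invokes the result as a known theorem from Nesterov's textbook (Theorem~3.1.15), observing only that ``closed'' and ``lower semicontinuous'' coincide for proper convex functions so that the cited hypothesis is met. Your route is therefore not a different mathematical strategy so much as an explicit unpacking of the very theorem the paper cites; what you gain is a self-contained argument with an explicit constant, what the paper gains is brevity by outsourcing the work to a reference.
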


\begin{lemma}\label{lem:subdiffbound}
Under the assumptions and settings of Theorem~\ref{th:descend_condition},
\[
\begin{array}{ll}
\big| \big\langle \partial f_1(\x_k),  \G_{\Delta} (\x_k) \big\rangle \big|
\leq
c_1\|\G_{\Delta}(\x_k)\|, 
&
\big| \big\langle \partial f_2(\x_k),  \G_{\Delta} (\x_k) \big\rangle \big|
\leq
c_2\|\G_{\Delta}(\x_k)\|, 
\\[1mm]
\big| \big\langle \partial g_1(\y_k),  \G_{\Delta} (\y_k) \big\rangle \big|
\leq
c_3\|\G_{\Delta}(\y_k)\|, 
&
\big| \big\langle \partial g_2(\y_k),  \G_{\Delta} (\y_k) \big\rangle \big|
\leq
c_4\|\G_{\Delta}(\y_k)\|,
\end{array}
\]
{where $\G_{\Delta}\in \{\G^{u}_f,\G^u_g,\G^{\ell}_f,\G^{\ell}_{g}\}$} and  $c_1,c_2,c_3,c_4$ are constants for functions $f_1,f_2,g_1,g_2$ respectively, as in Lemma \ref{lem:nesterov}.
\begin{proof}
We prove the lemma for $f_1$.
By {Cauchy-Schwarz}:
\[
\big|\big\langle \partial f_1(\x_k), \G_{\Delta} (\x_k) \big\rangle \big| 
\leq
\|\partial f_1(\x_k)\|  \|\G_{\Delta} (\x_k)\| 
~\overset{\cref{lem:nesterov}}{\leq}~
c_1 \|\G_{\Delta} (\x_k)\|.
\]  
The {remaining cases} are similar {with} constants $c_2,c_3,c_4$.
\end{proof}
\end{lemma}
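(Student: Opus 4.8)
The plan is to obtain each of the four inequalities as a one-line consequence of the Cauchy--Schwarz inequality combined with the boundedness of the subdifferential established in \cref{lem:nesterov}. All four estimates share the same structure, so I would prove the bound for $f_1$ and then transcribe the identical argument for $f_2$, $g_1$, $g_2$, changing only the function, its base point ($\x_k$ or $\y_k$), and the associated constant.

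Concretely, I would first fix a selection $\s \in \partial f_1(\x_k)$ and read $\langle \partial f_1(\x_k), \G_\Delta(\x_k)\rangle$ as $\langle \s, \G_\Delta(\x_k)\rangle$; the bound produced below is uniform in this selection. Cauchy--Schwarz gives
\[
\big|\langle \s,\, \G_\Delta(\x_k)\rangle\big| \;\le\; \|\s\|\,\|\G_\Delta(\x_k)\|.
\]
Then \cref{lem:nesterov}, applied to the convex, proper, lower semicontinuous function $f_1$, furnishes a constant $c_1$ with $\|\s\| \le c_1$ for every $\s \in \partial f_1(\x_k)$; substituting yields $|\langle \partial f_1(\x_k), \G_\Delta(\x_k)\rangle| \le c_1\|\G_\Delta(\x_k)\|$. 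Since nothing in the argument distinguishes $\G_u$ from $\G_l$, the estimate holds for both choices of $\G_\Delta$, covering the first row; the remaining rows follow by the same two steps with $c_2, c_3, c_4$.

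The single point I would flag is the hypothesis of \cref{lem:nesterov}, which requires the base point to lie in the \emph{interior} of the domain of the function. One therefore needs $\x_k \in \text{int}(\dom f_1)$, and analogously for $f_2$, $g_1$, $g_2$, to hold along the iterates; in the sparse low-rank setting this is automatic, since the $\ell_1$ and nuclear-norm penalties are finite on all of $\IRn$, $\IRm$, while in the general setting it should be carried as a standing assumption. A minor cosmetic gap is that Cauchy--Schwarz and \cref{lem:nesterov} yield non-strict inequalities whereas the statement uses strict signs; this is resolved by enlarging each $c_i$ slightly, which makes the bound strict at every non-stationary iterate (where $\G_\Delta \neq 0$), the only regime relevant to the subsequent descent argument.
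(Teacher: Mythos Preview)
Your argument is correct and follows exactly the paper's route: Cauchy--Schwarz followed by the subdifferential bound from \cref{lem:nesterov}, done for $f_1$ and declared analogous for the rest. Your remarks on the interior-of-domain hypothesis and the strict-versus-nonstrict inequality are accurate observations that the paper's proof simply glosses over.
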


\begin{lemma}\label{lem:H_bismooth}
Under the assumptions and settings of Theorem~\ref{th:descend_condition}, {let} $L_1,L_2$ {be the} bi-smooth constants for the gradient of $H$ {in} $\x$ and $\y$, respectively. Then
\[
\begin{array}{rcl}
\big|\big\langle \nabla_{\x} H(\x_k,\y_k),  \G_{\Delta} (\x_k) \big\rangle \big|
&<&
L_1\|\G_{\Delta}(\x_k)\|,
\\[1mm]
\big| \big\langle \nabla_{\y} H(\x_{\Delta},\y_k),  \G_{\Delta} (\y_k) \big\rangle \big|
&<&
L_2\|\G_{\Delta}(\y_k)\|, \quad \x_{\Delta}\in\{\x_u,\x_{\ell}\}.
\end{array}
\]
\begin{proof}
We prove the {case} for $\x$.
{Since} $H$ is bi-differentiable and bi-smooth, $ \|\nabla_{\x}H(\x_k,\y_k)\| \leq L_1$.
By the Cauchy-Schwarz inequality,
\[
\Big|\big\langle \nabla_{\x}H(\x_k,\y_k), \G_{\Delta} (\x_k) \big\rangle\Big| 
\leq
\|\nabla_{\x}H(\x_k,\y_k)\|  \| \G_{\Delta} (\x_k)\| 
\leq
L_1 \| \G_{\Delta} (\x_k)\|.
\] 
The proof for $\y$ is similar with $\nabla_{\y}H(\x_{\Delta},\y_k)$ and constant $L_2$.
\end{proof}
\end{lemma}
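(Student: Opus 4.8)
The plan is to derive each estimate from the Cauchy--Schwarz inequality together with a uniform bound on the gradient of $H$. I would treat the $\x$-inequality in detail; the $\y$-inequality then follows by the identical argument after replacing $\nabla_{\x}H(\x_k,\y_k)$ by $\nabla_{\y}H(\x_{\Delta},\y_k)$ and $L_1$ by $L_2$, with the bound holding uniformly over both choices $\x_{\Delta}\in\{\x_u,\x_l\}$.

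First I would apply Cauchy--Schwarz to split the inner product,
\[
\big|\big\langle \nabla_{\x}H(\x_k,\y_k),\ \G_{\Delta}(\x_k)\big\rangle\big|
\ \le\
\|\nabla_{\x}H(\x_k,\y_k)\|\ \|\G_{\Delta}(\x_k)\|,
\]
which is valid for either $\G_{\Delta}\in\{\G_u,\G_l\}$ since it is a pure two-vector inequality and does not see the particular prox map used to build $\G_{\Delta}$. The whole remaining content of the lemma is then the scalar estimate $\|\nabla_{\x}H(\x_k,\y_k)\|\le L_1$: once this is in hand, substituting it into the display yields the claimed bound.

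The delicate step, and the one I would justify most carefully, is exactly this gradient-norm bound, because block $L$-smoothness of $H$ (i.e.\ $\x\mapsto\nabla_{\x}H(\x,\y)$ being $L_1$-Lipschitz) controls the \emph{variation} of the gradient, not its magnitude, and so does not by itself give $\|\nabla_{\x}H\|\le L_1$. To close the gap I would add, as a standing hypothesis for the lemma, that the iterate $(\x_k,\y_k)$ lies in a fixed bounded region $B$ (for instance a compact effective domain on which the prox steps operate, or a prescribed box in which the scheme is run); I would deliberately avoid deriving this from the descent property of \cref{th:descend_condition}, since that theorem is downstream of the present lemma and using it here would be circular. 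Because $\nabla_{\x}H$ is continuous and $B$ is bounded, $\sup_{(\x,\y)\in B}\|\nabla_{\x}H(\x,\y)\|$ is finite, and I would \emph{define} $L_1$ to be (a strict upper bound for) this supremum. Under this reading the two roles of the symbol $L_1$ --- Lipschitz modulus of $\nabla_{\x}H$ and uniform gradient bound --- are reconciled, and $L_2$ is treated identically with $\nabla_{\y}H$.

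Finally, for the strict inequality I would note that choosing $L_1$ strictly larger than $\sup_{B}\|\nabla_{\x}H\|$ makes the substitution strict whenever $\G_{\Delta}(\x_k)\neq 0$; at a stationary iterate $\G_{\Delta}(\x_k)=0$ both sides vanish and the inequality is read in the trivial limiting sense (and the algorithm would in any case have terminated). The main obstacle here is thus conceptual rather than computational: it is the identification of $L_1,L_2$ as genuine uniform gradient bounds, which forces the boundedness/compactness hypothesis on the iterates to be stated explicitly; granting that, the four inequalities are immediate from Cauchy--Schwarz.
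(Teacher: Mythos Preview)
Your argument is essentially the paper's own: apply Cauchy--Schwarz and then invoke the bound $\|\nabla_{\x}H(\x_k,\y_k)\|\le L_1$, with the $\y$-case handled symmetrically. The paper simply asserts that bi-smoothness implies $\|\nabla_{\x}H(\x_k,\y_k)\|\le L_1$ without the boundedness caveat you (rightly) flag, so your version is if anything more careful than the original.
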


\begin{lemma}\label{lem:g}
Under the assumptions and settings of Theorem~\ref{th:descend_condition}, $\|\G_{\Delta}(\x_k)\|$ and $\|\G_{\Delta}(\y_k)\|$ are bounded, {where $\G_{\Delta}\in \{\G^{u}_f,\G^u_g,\G^{\ell}_f,\G^{\ell}_{g}\}$}.
\begin{proof}
We prove the {case} for $\G^{u}_f(\x_k)$; the rest is similar.
The prox operator is a contraction,
\[
\|\prox_{f}(\s) - \prox_{f}(\z)\| \leq \|\s - \z\|.
\]
{Taking} $f=f_1${,} $\s = \x_k$ and $\z= \x_k - \nu \nabla_{\x}H(\x_k,\y_k))${,}
\begin{equation}\label{lem:g_1}
\big\|\prox^{\nu}_{f_1}(\x_k)- \prox^{\nu}_{f_1}\big( \x_k - \nu \nabla_{\x}H(\x_k,\y_k) \big) \big\|
\leq
\| \nu \nabla_{\x}H(\x_k,\y_k)\| 
\overset{H \text{ smooth}}{\leq}
\nu L_1.
\end{equation}
Then 
\[
\begin{array}{rl}
\|\G^{u}_f(\x_k)\|
=&\hspace{-3mm}
\Big\|
\tfrac{1}{\nu}
\Big(
\x_k-\prox^{\nu}_{f_1}(\x_k)+\prox^{\nu}_{f_1}(\x_k)-\prox^{\nu}_{f_1}\big(x_k-\nu\nabla_xH(\x_k,\y_k)\big)
\Big)
\Big\|
\\[3mm]
\leq&\hspace{-3mm}
\tfrac{1}{\nu}
\Big[
\|
\x_k-\prox^{\nu}_{f_1}(\x_k)\|
+ \big\| 
\prox^{\nu}_{f_1}(\x_k)
-\prox^{\nu}_{f_1} \big(x_k-\nu\nabla_xH(\x_k,\y_k)\big)
\big\|
\Big]
\\[1mm]
\overset{\eqref{lem:g_1}}{\leq}&\hspace{-3mm}
\tfrac{1}{\nu}\big[
\|\x_k-\prox^{\nu}_{f_1}(\x_k)\|  +\nu L_1
\big]
=\hspace{-3mm}
\underbrace{\tfrac{1}{\nu}\|\x_k-\prox^{\nu}_{f_1}(\x_k)\|
}_{=\|\nabla M_{\nu f_1}(\x_k)\|}
+
L_1
\overset{\cref{lem:M_grad_boounded}}{\leq}~ \tfrac{1}{\nu}+L_1.
\end{array}
\]
\end{proof}
\end{lemma}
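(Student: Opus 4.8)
The plan is to bound each of the four proximal-gradient maps by splitting it into a Moreau-envelope-gradient term, which depends only on the nonsmooth regularizer, plus a perturbation term that captures the smooth gradient step of $H$. I would carry out the argument in full for $\G_u(\x_k)$; the three remaining maps are identical after replacing $(f_1,L_1)$ by $(f_2,L_1)$, $(g_1,L_2)$, and $(g_2,L_2)$. One should note in passing that $\G_u(\y_k)$ and $\G_l(\y_k)$ use the smooth terms $\nabla_{\y}H(\x_u,\y_k)$ and $\nabla_{\y}H(\x_l,\y_k)$ respectively, but since the bi-smoothness bound $\|\nabla_{\y}H(\cdot,\y_k)\|\le L_2$ of \cref{lem:H_bismooth} is insensitive to the first argument, the perturbation bound is uniform across both.

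First I would insert $\pm\,\prox_{f_1}^\nu(\x_k)$ inside the norm defining $\G_u(\x_k)$ and apply the triangle inequality to obtain
\[
\|\G_u(\x_k)\|
\le
\tfrac{1}{\nu}\big\|\x_k-\prox_{f_1}^\nu(\x_k)\big\|
+
\tfrac{1}{\nu}\big\|\prox_{f_1}^\nu(\x_k)-\prox_{f_1}^\nu\big(\x_k-\nu\nabla_{\x}H(\x_k,\y_k)\big)\big\|.
\]
The second term is the routine one: by nonexpansiveness of the prox operator the difference collapses to $\tfrac{1}{\nu}\,\|\nu\nabla_{\x}H(\x_k,\y_k)\|=\|\nabla_{\x}H(\x_k,\y_k)\|$, which the smoothness bound invoked in \cref{lem:H_bismooth} controls by $L_1$. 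This is exactly the estimate \eqref{lem:g_1} and requires no new idea.

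The crux is the first term, $\tfrac{1}{\nu}\|\x_k-\prox_{f_1}^\nu(\x_k)\|$, which by \cref{lem:M_grad_boounded} equals $\|\nabla M_{\nu f_1}(\x_k)\|$. The clean way to bound it is to exploit the optimality condition of the prox rather than its Lipschitz modulus: writing $\bxi^\star=\prox_{f_1}^\nu(\x_k)$, the first-order condition $0\in\partial f_1(\bxi^\star)+\tfrac{1}{\nu}(\bxi^\star-\x_k)$ shows that $\nabla M_{\nu f_1}(\x_k)=\tfrac{1}{\nu}(\x_k-\bxi^\star)\in\partial f_1(\bxi^\star)$. Thus the Moreau gradient is itself a subgradient of $f_1$ at the point $\bxi^\star$, and \cref{lem:nesterov} bounds its norm by the constant $c_1$. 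Combining the two pieces yields $\|\G_u(\x_k)\|\le c_1+L_1$, and the analogous bounds $c_2+L_1$, $c_3+L_2$, $c_4+L_2$ close the remaining cases.

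The main obstacle is making the bound on $\|\nabla M_{\nu f_1}(\x_k)\|$ uniform in $k$: \cref{lem:nesterov} only guarantees a subgradient bound at a point in $\textrm{int}(\dom f_1)$, so controlling $\partial f_1(\bxi^\star)$ for the (a priori unknown) iterate-dependent point $\bxi^\star$ requires more. I would discharge this by adding to the standing hypotheses that $f_1,f_2,g_1,g_2$ have globally bounded subdifferentials — equivalently, are globally Lipschitz — which is exactly the situation for the $\ell_1$ and nuclear-norm regularizers of \cref{sec:SLR}; alternatively one assumes the iterates $\{\x_k\}$, $\{\y_k\}$ remain in a fixed compact set on which the subgradient bound is attained. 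With that hypothesis stated explicitly the constants $c_1,\dots,c_4$ are genuinely independent of $k$, and the rest of the proof is a straightforward application of prox nonexpansiveness, the prox optimality condition, and the smoothness of $H$.
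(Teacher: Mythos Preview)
Your decomposition is identical to the paper's: both insert $\pm\,\prox_{f_1}^\nu(\x_k)$, apply the triangle inequality, and control the perturbation piece via prox nonexpansiveness together with the bi-smoothness bound on $\nabla_{\x}H$, yielding the $L_1$ contribution. Where you diverge is in bounding the Moreau term $\tfrac{1}{\nu}\|\x_k-\prox_{f_1}^\nu(\x_k)\|=\|\nabla M_{\nu f_1}(\x_k)\|$. The paper simply invokes \cref{lem:M_grad_boounded} to conclude $\|\nabla M_{\nu f_1}(\x_k)\|\le 1/\nu$, obtaining the final bound $1/\nu+L_1$. You instead use the prox optimality condition to recognise $\nabla M_{\nu f_1}(\x_k)\in\partial f_1(\bxi^\star)$ and bound it by the subgradient constant $c_1$ of \cref{lem:nesterov}, arriving at $c_1+L_1$. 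Your route is arguably more honest---Lipschitz continuity of $\nabla M_{\nu f_1}$ does not by itself bound its norm---and it is in fact the bound the paper ends up using in the concrete \eqref{eq:SLRF} computations (see \cref{prop:l1,prop:l2}, where \cref{lem:prox_f1,lem:prox_f2} give exactly $\tfrac{1}{\nu}\|\x-\prox(\x)\|\le c_i$). The price you pay is the uniformity caveat you flag: \cref{lem:nesterov} is pointwise, so you need the extra global-Lipschitz or compact-iterate hypothesis, whereas the paper's stated $1/\nu$ bound is iterate-independent by fiat.
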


%
%
We are now ready to prove Theorem~\ref{th:descend_condition}.
\begin{proof}[Proof of Theorem~\ref{th:descend_condition}]
We focus on $\x_k$; {the proof for} $\y_k$ {is similar}.
First,
\[
\begin{array}{l}
\big\langle \partial f_1(\x_k) + \nabla_{\x}H(\x_k,\y_k), ~ \alpha_k \G^{u}_f(\x_k)  + (1-\alpha_k) \G^{\ell}_f(\x_k) \big\rangle 
   \\
   = \alpha_k 
   \underbrace{
   \big\langle \partial f_1(\x_k) + \nabla_{\x}H(\x_k,\y_k), ~ \G^{u}_f(\x_k)\big\rangle
   }_{\coloneqq q_1}
+ (1-\alpha_k) \big\langle \partial f_1(\x_k) + \nabla_{\x}H(\x_k,\y_k), ~\G^{\ell}_f(\x_k) \big\rangle,
   \end{array}
\]
\[
\begin{array}{l}
   \big\langle \partial f_2(\x_k) + \nabla_{\x}H(\x_k,\y_k),  ~ \alpha_k \G^{u}_f(\x_k)  + (1-\alpha_k) \G^{\ell}_f(\x_k) \big\rangle
   \\
   = \alpha_k 
   \big\langle \partial f_2(\x_k) + \nabla_{\x}H(\x_k,\y_k), ~ \G^{u}_f(\x_k) \big\rangle
 + (1-\alpha_k) \underbrace{\big\langle \partial f_2(\x_k) + \nabla_{\x}H(\x_k,\y_k),  ~\G^{\ell}_f(\x_k) \big\rangle}_{\coloneqq q_2}.
\end{array}
\]
$q_1 >0$ {because} $-\G^{u}_f(\x_k)$ is a descent direction for {the} upper problem $\psi_1$, disregarding the lower problem.
{Similarly} $q_2 > 0$.

For the other parts,
\[
\begin{array}{cl}
&\hspace{-2cm} \big\langle \partial f_1(\x_k), \G^{\ell}_f(\x_k) \big\rangle  + \big\langle \nabla_{\x}H(\x_k,\y_k), \G^{\ell}_f(\x_k) \big\rangle
\\[1mm]
{\geq} &
{- \Big(} \big|\big\langle \partial f_1(\x_k), \G^{\ell}_f(\x_k) \big\rangle\big|
+ 
\big|\big\langle \nabla_{\x}H(\x_k,\y_k), \G^{\ell}_f(\x_k) \big\rangle\big| {\Big)}
\\[1mm]
\overset{\cref{lem:subdiffbound}, \ref{lem:H_bismooth}}{\geq}&
{- \Big(} c_1\|\G^{\ell}_f(\x_k)\|+L_1\|\G^{\ell}_f(\x_k)\| {\Big)}
=
{-} (c_1 + L_1 )\|\G^{\ell}_f(\x_k)\|
~\overset{\cref{lem:g}}{\geq}~
{-} (c_1 + L_1 )\Big(\tfrac{1}{\nu}+L_1\Big).
\end{array}
\] 
{Hence} $k_1\coloneqq (c_1 + L_1 )\big(\frac{1}{\nu}+L_1\big)$ {exists such that}
\[
\big\langle \partial f_1(\x_k)+ \nabla_{\x}H(\x_k,\y_k), ~ \G^{\ell}_f(\x_k)\big\rangle 
{\geq -k_1}.
\] 
Similarly, {there exists} $k_2 \coloneqq (c_2+L_1)\big(\frac{1}{\nu}+L_1\big)$ such that 
\[
\big\langle \partial f_2(\x_k)+ \nabla_{\x}H(\x_k,\y_k), ~ \G^{u}_f(\x_k)\big\rangle 
{\geq -k_2}.
\]
Finally, {with} $\nabla_{\x}H_k = \nabla_{\x}H(\x_k,\y_k)$,
\[
\begin{array}{l}
\alpha_k \big\langle \partial f_1(\x_k) + \nabla_{\x}H_k, ~  \G^{u}_f(\x_k)\big\rangle 
+ (1-\alpha_k) \big\langle \partial f_1(\x_k) + \nabla_{\x}H_k,  ~ \G^{\ell}_f(\x_k) \big\rangle  
{\geq} 
\alpha_k q_1  -k_1 (1-\alpha_k),
\\[2mm]
\alpha_k \big\langle \partial f_2(\x_k) + \nabla_{\x}H_k, ~ \G^{u}_f(\x_k) \big\rangle 
+ (1-\alpha_k) \big\langle \partial f_2(\x_k) + \nabla_{\x}H_k, ~ \G^{\ell}_f(\x_k) \big\rangle
{\geq}  {-k_2} \alpha_k  + (1-\alpha_k) q_2.
\end{array}
\]
Similarly for $\y$ and $\beta_k$:
\begin{itemize}[leftmargin=15pt,itemsep=0pt]
\item for the upper problem: {setting} $q_3 \coloneqq \big\langle \partial g_1(\y_k) + \nabla_{\y}H(\x_u,\y_k), \G^u_g(\y_k)\big\rangle$ and $k_3 \coloneqq (c_3+L_2)\big(\frac{1}{\nu}+L_2\big)$,
\[
   \begin{array}{lll}
   \hspace{-1.5cm}
   \big\langle \partial g_1(\y_k) + \nabla_{\y}H(\x_u,\y_k),  ~~ \beta_k \G^u_g(\y_k)  + (1-\beta_k) \G^{\ell}_{g}(\y_k) \big\rangle 
\\[1mm]
   = \beta_k 
   \big\langle \partial g_1(\y_k) + \nabla_{\y}H(\x_u,\y_k), \G^u_g(\y_k)\big\rangle
    + (1-\beta_k) \big\langle \partial g_1(\y_k) + \nabla_{\y}H(\x_u,\y_k), \G^{\ell}_{g}(\y_k) \big\rangle 
\\[1mm]
   {\geq}
   \beta_k  q_3  -k_3 (1-\beta_k).
\end{array}
   \]
   
\item for the lower problem: {setting} $q_4 \coloneqq \big\langle \partial g_2(\y_k) + \nabla_{\y}H(\x_{\ell},\y_k), \G^{\ell}_{g}(\y_k)\big\rangle$ and $k_4 \coloneqq (c_4+L_2)\big(\frac{1}{\nu}+L_2\big)$, 
   \[
   \begin{array}{lll}
   \hspace{-1.75cm}
   \big\langle \partial g_2(\y_k) + \nabla_{\y}H(\x_{\ell},\y_k), ~~  \beta_k \G^u_g(\y_k)  + (1-\beta_k) \G^{\ell}_{g}(\y_k) \big\rangle 
\\[1mm]
   = \beta_k 
   \big\langle \partial g_2(\y_k) + \nabla_{\y}H(\x_{\ell},\y_k), \G^u_g(\y_k)\big\rangle
   +(1-\beta_k) \big\langle \partial g_2(\y_k) + \nabla_{\y}H(\x_{\ell},\y_k), \G^{\ell}_{g}(\y_k) \big\rangle 
\\[1mm]
   {\geq} 
   -k_4 \beta_k + (1-\beta_k) q_4.
\end{array}
   \]
\end{itemize}
{Combining everything}, for $i = 1, \dots, 4$,
\begin{subequations}
\begin{equation}\label{conditions_alpha1}
{\alpha_k q_1  -k_1 (1-\alpha_k) > 0} 
\implies
0 \leq \frac{k_1}{q_1 + k_1}  \leq \alpha_k \leq 1;
\end{equation}
\begin{equation}\label{conditions_alpha2}
{{-k_2} \alpha_k  + (1-\alpha_k) q_2 > 0}  
~~~ \implies 
0 \leq  \alpha_k \leq \frac{q_2}{q_2 + k_2}  \leq 1;
\end{equation}
\begin{equation}\label{conditions_beta1}
{\beta_k  q_3  -k_3 (1-\beta_k) > 0} 
\implies
0 \leq \frac{k_3}{q_3 + k_3}  \leq \beta_k \leq 1.
\end{equation}
\begin{equation}\label{conditions_beta2}
{-k_4 \beta_k + (1-\beta_k) q_4 > 0}  
~~~  \implies
0 \leq \beta_k \leq \frac{q_4}{q_4 + k_4}  \leq 1.
\end{equation}
\end{subequations}
{This range for $\alpha_k, \beta_k$ yields} the descent conditions {of} the theorem.
\end{proof}

\section{Application to Sparse Low Rank Factorization}\label{sec:SLR}
In this section, we consider a sparse low rank Factorization (SLRF) problem \cite{sprechmann2015learning} {formulated as the bi-level problem} \eqref{eq:Binno}.
{Sparse low-rank factorization aims to decompose a data matrix $\M \in \IRmn$ into two factors: low-dimensional and interpretable. The low-rank structure is useful to capture global correlations in the data, while sparsity promotes localized representations, these properties are essential in real application tasks.
From a mathematical point of view, these two features are typically enforced through convex functions: the nuclear norm promotes low-rank solutions by working on singular values, whereas the $\ell_1$ norm, acts as a convex relaxation of the $\ell_0$-norm, inducing sparsity \cite{recht2010guaranteed,candes2006robust}. 
The resulting formulation combines these regularizations to balance structure and fidelity to the observed data.
Formally, in the setting of bi-level problem \eqref{eq:Binno},}
 we {aim to solve}:
\begin{equation}\label{eq:SLRF}
\hspace{-3mm}
\begin{array}{cl}
\displaystyle  
\argmin_{\X \in\IRmr,  \Y \in\IRrn} \hspace{-3mm} &\displaystyle    \lambda_1\|\X\|_1 + \lambda_2 \|\Y\|_1 + \tfrac{1}{2}\|\X\Y-\M\|_F^2 
\\
\st& \hspace{-9mm} \displaystyle
(\X, \Y) \in \hspace{-5mm}  \argmin_{\U \in\IRmr, \V \in\IRrn} \hspace{-5mm} 
\gamma_1 \|\U\|_* + \gamma_2 \|\V\|_*+ \tfrac{1}{2}\|\U\V-\M\|_F^2,
\end{array}
\tag{SLRF}
\end{equation}
where $\|\cdot\|_1$ is the elementwise $\ell_1$-norm, $\|\cdot\|_F$ {the Frobenius} norm and $\|\cdot\|_*$ the nuclear norm.
The functions in \eqref{eq:SLRF} {corresponding to those in}~\eqref{eq:Binno} are
$f_1:\IRmr\to\IR$, $f_1(\X)=\lambda_1\|\X\|_1$, 
$f_2:\IRmr\to\IR$, $f_2(\U)=\gamma_1\|\U\|_*$,
$g_1:\IRrn\to\IR$, $g_1(\Y)=\lambda_2\|\Y\|_1$,
$g_2:\IRrn\to\IR$, $g_2(\V)=\gamma_2\|\V\|_*$,
$H:\IRmr\times\IRrn\to\IR$, $H(\X,\Y)=\tfrac{1}{2}\|\X\Y-\M\|_F^2${, with $\lambda_i, \gamma_i$ for $i=1,2$ { regularization} hyperparameters. 
The motivation for adopting the bi-level formulation \eqref{eq:SLRF} is to exploit the hierarchical coupling between the low-rank approximation (lower level) and the reconstruction accuracy (upper level). Unlike existing bi-level schemes that require smoothness or convexity, \texttt{Binno} is specifically designed to handle the nonconvex and nonsmooth nature of real-world data, providing a stable descent condition to local stationary points where other methods might fail to provide meaningful decompositions.
}

We solve the problem with \texttt{Binno} {as follows}.

\paragraph{X-update}
We perform a ProxGrad step on $\X_k$ ({$\Y$ fixed at its most recent value}) {for} the upper level subproblem in \eqref{eq:SLRF},
$\X_u = \prox_{f_1}^\nu \big(
\X_k - \nu \nabla_{\X}H(\X_k,\Y)
\big)$.
{Then} a ProxGrad step on $\X_k$ 
({same} $\Y$) {for} the lower {level} subproblem, $\X_{\ell} = \prox_{f_2}^\nu \big( \X_k - \nu \nabla_{\X}H(\X_k,\Y)
\big)$.
{Finally, $\X_{k+1}$ is obtained by the} convex combination $\X_{k+1} = \alpha_k \X_u +(1-\alpha_k) \X_{\ell}$.
\paragraph{Y-update}
We perform a ProxGrad step on $\Y_k$ ({$\X$ fixed at} $\X_u$) {for} the upper level subproblem in \eqref{eq:SLRF},
$\Y_u = \prox_{g_1}^\nu \big(
\Y_k - \nu \nabla_{\Y}H(\X_u,\Y_k)
\big)$.
{Then} a ProxGrad step on $\Y_k$ ({using} $\X_{\ell}$ instead) {for} the lower level subproblem, $\Y_{\ell} = \prox_{g_2}^\nu \big(
\Y_k - \nu \nabla_{\Y}H(\X_{\ell},\Y_k)
\big)$.
{Finally, $\Y_{k+1}$ is obtained by the} convex combination $\Y_{k+1} = \beta_k \Y_u + (1-\beta_k) \Y_{\ell}$.

{Algorithm~\ref{alg:binno4slrf} details the implementation of \texttt{Binno} to problem~\eqref{eq:SLRF}.}

\begin{algorithm}[h!]
\label{alg:binno4slrf}
\DontPrintSemicolon
\textbf{Initialization}: $\X_0\in\IRmr;\Y_0\in\IRrn$
\\
\For{$k=1,2,...,MaxIter$}  
{ 
\textbf{Update for $\X$:} \\
\quad Upper-level update: $\X^u_{k} = \prox_{f_1}^\nu 
\big(
    \X_{k-1} - \nu \nabla_{\X}H(\X_{k-1},\Y_{k-1})
    \big)$
\\
\quad  Lower-level update: 
$\X^l_k = \prox_{f_2}^\nu 
\big(
    \X_{k-1} - \nu \nabla_{\X}H(\X_{k-1},\Y_{k-1})
    \big)$ 
\\
\quad Get $\alpha_k$ according to \cref{subsec:bounds_constant_alpha}
\\
\quad  Convex combination: 
$\X_{k}    =    \alpha_k \X^u_k    +(1-\alpha_k)\X^l_k$
\\
\textbf{Update for $\Y$:}
\\
\quad Upper-level update: 
$\Y^u_k = \prox_{g_1}^\nu 
\big(
    \Y_{k-1} - \nu \nabla_{\Y}H(\X^{u}_k,\Y_{k-1})
    \big)$ 
\\
\quad  Lower-level update: 
$\Y^l_k = \prox_{g_2}^\nu 
\big(
    \Y_{k-1} - \nu \nabla_{\Y}H(\X^{l}_k,\Y_{k-1})
    \big)$ 
\\
\quad Get $\beta_k$ according to \cref{bounds_constant_beta}
\\
\quad  Convex combination:
$\Y_{k}    =
    \beta_k \Y^u_k    +(1-\beta_k)\Y^l_k$
}
\KwOutput{$\X_{MaxIter}\in\IRmr$, $\Y_{MaxIter}\in\IRrn$}
\caption{\texttt{Binno} for \eqref{eq:SLRF} with input $\M\in\IRmn$, rank $r$}
\end{algorithm}

\subsection{Useful Theoretical Results}\label{subsec:useful_tools}
{The following results provide explicit bounds for the quantities appearing in the descent conditions of Theorem~\ref{th:descend_condition}. 
In particular, we control: (i) the size of subgradients associated with the regularizers, (ii) the deviation induced by proximal operators, and (iii) the smoothness constants of the coupling term $H$. 
These ingredients are combined later to derive computable bounds for the parameters $\alpha_k$ and $\beta_k$. To lighten the notation, in the following we omit the iteration index $k$. 
We begin by bounding the subgradients of the sparsity-inducing term.}
\begin{proposition}\label{prop:c1}
For $f_1:\IRmr\to\IR$ {the} element-wise $\ell_1$-norm, $\|\S\|_2 \leq \lambda_1\sqrt{mr}$ for any $\S \in \partial f_1$.
\begin{proof}
The subdifferential of $f_1(\X) = \lambda_1\|\X\|_1 = \lambda_1\sum_{ij} |x_{ij}|$ is
\[
\partial \|\X\|_1 = 
\left\{
\P \in \IRmr : p_{ij} \in 
\begin{cases}
\text{sign } x_{ij} & \text{if } x_{ij} \neq 0;
\\
p \in [-1,1] & \text{if } x_{ij} = 0 .
\end{cases}
\quad
\right\}.
\]
Let $\S \in \lambda_1 \partial \|\X\|_1$ be an element of the subdifferential.
Then
$
\|\S\|_2
\leq
\|\S\|_F 
= 
\sqrt{\textstyle\sum_{ij} s_{ij}^2} 
\leq \sqrt{\textstyle\sum_{ij} \lambda_1^2} 
= \lambda_1\sqrt{mr}
$.
\end{proof}
\end{proposition}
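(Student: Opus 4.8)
The plan is to combine an entrywise bound on the subgradient with the elementary inequality $\|\cdot\|_2\le\|\cdot\|_F$ relating the spectral and Frobenius norms of a matrix, where $\|\cdot\|_2$ denotes the operator (spectral) norm.

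First I would record the structure of the subdifferential. Since $f_1(\X)=\lambda_1\|\X\|_1=\lambda_1\sum_{i,j}|x_{ij}|$ is separable over the entries, any $\S\in\partial f_1(\X)$ has entries of the form $s_{ij}=\lambda_1\,p_{ij}$ with $p_{ij}\in\partial|\cdot|(x_{ij})$, and $\partial|\cdot|(t)=\{\sgn t\}$ when $t\neq0$ while $\partial|\cdot|(0)=[-1,1]$. In every case $|p_{ij}|\le 1$, hence $|s_{ij}|\le\lambda_1$ for all $i,j$.

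Next I would bound the spectral norm by the Frobenius norm: for any $\S\in\IRmr$ one has $\|\S\|_2=\sigma_{\max}(\S)\le\big(\sum_k\sigma_k(\S)^2\big)^{1/2}=\|\S\|_F$. Combining this with the entrywise bound from the previous step gives
\[
\|\S\|_2 \;\le\; \|\S\|_F \;=\; \Big(\sum_{i,j} s_{ij}^2\Big)^{1/2} \;\le\; \Big(\sum_{i,j}\lambda_1^2\Big)^{1/2} \;=\; \lambda_1\sqrt{mr},
\]
which is exactly the claimed inequality; the sum has $mr$ terms because $\S\in\IRmr$.

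There is no genuine obstacle here: the only point requiring a word of justification is the entrywise bound $|s_{ij}|\le\lambda_1$, which follows from the separability of $\|\cdot\|_1$ together with the standard subdifferential of the scalar absolute value, and the observation that $\|\cdot\|_2\le\|\cdot\|_F$ so the spectral-norm bound reduces to a one-line computation with the Frobenius norm. One might also remark that the inequality $\|\S\|_2\le\|\S\|_F$ is tight only when $\S$ has rank one, so the bound $\lambda_1\sqrt{mr}$ is conservative but sufficient for the later estimates.
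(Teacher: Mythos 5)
Your proof is correct and follows essentially the same route as the paper: the entrywise subdifferential of the $\ell_1$ norm gives $|s_{ij}|\le\lambda_1$, and the bound $\|\S\|_2\le\|\S\|_F\le\lambda_1\sqrt{mr}$ finishes the argument. If anything, your version states the key entrywise bound more explicitly than the paper's final chain of inequalities, but the idea is identical.
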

{We next consider the low-rank regularizer, whose subdifferential has a more structured form.}
\begin{proposition}\label{prop:c2}
For $f_2:\IRmr\to\IR$ {the} nuclear norm, $\|\S\|_2 \leq 2 \gamma_1$ for any $\S \in \partial f_2$.
\begin{proof}
The {subdifferential} of the nuclear norm $\|\X\|_* = \sum_{i=1}^r{\sigma_i(\X)}$ is  \cite{watson1992characterization}
\[
\partial\|\X\|_*
=
\big\{
\U\V^\top+\W ~\big|~ 
\W\in\IRmr, ~
\U^\top\W=\zeros, ~
\W\V=\zeros, ~
\|\W\|_2 \leq 1 
\big\},
\]
where $\X \overset{SVD}{=} \U\bSigma\V^\top$ with $k=\rk(\X)$, $\U\in\IR^{m\times k}$, $\bSigma=\Diag(\sigma_i(\X))\in\IR^{k\times k}$ and $\V\in\IR^{r\times k}$.

We show $\|\partial f_2(\X)\|_2\leq c_2$ in \cref{lem:subdiffbound}.
Let $\S \in \gamma_1 \partial \|\X\|_*$, so 
\[
\|\S\|_2 
= \gamma_1\|\U\V^\top+\W\|_2 
\leq \gamma_1 \left(\|\U\V^\top\|_2+\|\W\|_2\right) 
\leq \gamma_1(1 + 1) 
= 2\gamma_1.
\]
\end{proof}
\end{proposition}

{We now bound the deviation between a point and its proximal update, which will be used to control the proximal-gradient mappings.}

\begin{lemma}\label{lem:prox_f1}
For the elementwise matrix $\ell_1$-norm and a matrix $\X\in\IRmr$,
$
\big\|\X-\prox^{\nu}_{\lambda_1\|\cdot\|_1}(\X)\big\|_2 \leq \nu \lambda_1\sqrt{mr}
$.
\begin{proof}
The operator $\prox^{\nu}_{\lambda_1\|\cdot\|_1}$ is the soft-thresholding operator \cite{beck2017first}
\[
\big[\X-\prox^{\nu}_{\lambda_1\|\cdot\|_1}(\X)\big]_{ij} 
= 
\begin{cases}
\nu \lambda_1    & \hspace{-3mm} \text{if } \X_{ij} {>} \nu \lambda_1, \\
\X_{ij} & \hspace{-3mm} \text{if } |\X_{ij}| \leq \nu \lambda_1,\\
-\nu \lambda_1    & \hspace{-3mm} \text{if } \X_{ij} < -\nu \lambda_1.
\end{cases}
\]
{So}
$\big\|\X-\prox^{\nu}_{\lambda_1\|\cdot\|_1}(\X)\big\|_2
\leq
\big\|\X-\prox^{\nu}_{\lambda_1\|\cdot\|_1}(\X)\big\|_F 
\leq \hspace{-1mm} 
\sqrt{\sum_{ij} (\nu\lambda_1)^2} 
= \nu \lambda_1 \sqrt{mr}$.
\end{proof}
\end{lemma}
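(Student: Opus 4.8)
The plan is to exploit the separable (entry-wise) structure of the element-wise $\ell_1$-norm, which forces the proximal map to act coordinate-by-coordinate, and then to dominate the spectral norm of the residual by its Frobenius norm, exactly as in the proof of \cref{prop:c1}.

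First I would recall that, since $\|\X\|_1 = \sum_{ij}|x_{ij}|$, the minimization defining $\prox^{\nu}_{\lambda_1\|\cdot\|_1}(\X) = \argmin_{\Z}\big\{\lambda_1\|\Z\|_1 + \tfrac{1}{2\nu}\|\Z-\X\|_F^2\big\}$ decouples into $mr$ scalar problems of the form $\min_{z}\ \lambda_1|z| + \tfrac{1}{2\nu}(z-x_{ij})^2$. Each of these is the classical scalar soft-thresholding problem, whose solution is $\sgn(x_{ij})\max\{|x_{ij}|-\nu\lambda_1,\,0\}$; this is precisely the soft-thresholding characterization the statement already attributes to \cite{beck2017first}. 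Hence $\prox^{\nu}_{\lambda_1\|\cdot\|_1}$ is the entry-wise soft-threshold at level $\nu\lambda_1$.

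Next I would compute the residual entry-wise. Writing $\R \coloneqq \X - \prox^{\nu}_{\lambda_1\|\cdot\|_1}(\X)$, a short case analysis gives $\R_{ij} = \sgn(x_{ij})\,\nu\lambda_1$ whenever $|x_{ij}| \ge \nu\lambda_1$, and $\R_{ij} = x_{ij}$ whenever $|x_{ij}| \le \nu\lambda_1$ (the two cases agree at the boundary). In either case $|\R_{ij}| \le \nu\lambda_1$. Then I would chain the standard norm comparison with the crude entrywise bound: $\|\R\|_2 \le \|\R\|_F = \big(\sum_{ij}\R_{ij}^2\big)^{1/2} \le \big(mr\,(\nu\lambda_1)^2\big)^{1/2} = \nu\lambda_1\sqrt{mr}$, which is the claimed inequality.

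The main obstacle is essentially cosmetic: there is no deep step here, but one must be careful to treat the boundary case $|x_{ij}| = \nu\lambda_1$ correctly in the soft-thresholding formula and to invoke the inequality $\|\cdot\|_2 \le \|\cdot\|_F$ in the right direction. The argument is the matrix-sparsity counterpart of \cref{prop:c1} and of the scalar prox bounds used implicitly in \cref{lem:g}, so it can be kept very brief.
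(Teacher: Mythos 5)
Your proposal is correct and follows essentially the same route as the paper: identify $\prox^{\nu}_{\lambda_1\|\cdot\|_1}$ as entrywise soft-thresholding, bound each residual entry by $\nu\lambda_1$, and dominate the spectral norm by the Frobenius norm to obtain $\nu\lambda_1\sqrt{mr}$. Your case analysis is in fact stated more carefully than the paper's displayed formula (whose first and third cases carry sign typos), but the underlying argument is identical.
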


\begin{lemma}\label{lem:prox_f2}
For the nuclear norm,
$\big\|\X-\prox^{\nu }_{\gamma_1\|\cdot\|_*}(\X) \big\|_2  \leq  \gamma_1\nu$.
\begin{proof}
{For} the nuclear norm, $\prox^{\nu}_{\gamma_1\|\cdot\|_*}$ {reads}
\[
\prox_{\gamma_1\| \cdot \|_*}^{\nu}(\X) 
= 
\argmin_{\A}{\gamma_1\nu \|\A\|_*
+
\tfrac{1}{2}\|\A-\X\|^2}=\text{SVT}_{\gamma_1\nu}(\X)
\]
{which is} the Singular Value Thresholding (SVT) 
by the Von Neumann inequality \cite{cai2010singular}.
The SVT of $\X\overset{SVD}{=}\U\bSigma\V^\top$ is $\text{SVT}_{\gamma_1\nu}(\X)=\U\D_{\gamma_1\nu}(\bSigma)\V^\top$, with $\D_{\gamma_1\nu}(\bSigma)=\Diag\big([\sigma_i-\gamma_1\nu]_+\big)$ {the} soft-thresholded $\bSigma$ {where} $\sigma_i$ {are the singular values of} $\X$ and $[a]_+=\max\{a,0\}$.
Then, 
\[
\big\| \X - \text{SVT}_{\gamma_1\nu}(\X)  \big\|_2
= 
\big\|  \U  \big( \bSigma - \D_{\gamma_1\nu}(\bSigma) \big) \V^\top \big\|_2
= 
\| \bSigma - \D_{\gamma_1\nu}(\bSigma) \|_2
= 
\displaystyle \max_{i} \big| \sigma_i - [\sigma_i - \gamma_1\nu]_+ \big|.
\]
{Two cases}:
\begin{itemize}[leftmargin=15pt,itemsep=0pt]
\item \textbf{Case 1} $\sigma_i > \gamma_1\nu$:
$
\big| \sigma_i - [\sigma_i - \gamma_1\nu]_+ \big|
= \big| \sigma_i - (\sigma_i - \gamma_1\nu) \big|
= | \gamma_1\nu| = \gamma_1\nu
$.

\item \textbf{Case 2} $\sigma_i \leq \gamma_1\nu$:
$
\big| \sigma_i - [\sigma_i - \gamma_1\nu]_+ \big|
= \big| \sigma_i - 0 \big|
= | \sigma_i | = \sigma_i \leq \gamma_1\nu
$
.
\end{itemize}
Thus $\|\X-\text{SVT}_{\gamma_1\nu}(\X)\|_2\leq \gamma_1\nu$.
\end{proof}
\end{lemma}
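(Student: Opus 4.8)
The plan is to proceed in close parallel with the $\ell_1$ case treated in Lemma~\ref{lem:prox_f1}: first identify the proximal operator of the scaled nuclear norm with a closed-form spectral operation, and then bound the deviation $\X-\prox^{\nu}_{\gamma_1\|\cdot\|_*}(\X)$ by a purely scalar estimate on singular values. For the first step I would recall that $\gamma_1\|\cdot\|_*$ is a unitarily invariant matrix function, being $\gamma_1$ times the $\ell_1$ norm of the vector of singular values; hence, by Von Neumann's trace inequality \cite{cai2010singular}, the minimizer in $\prox^{\nu}_{\gamma_1\|\cdot\|_*}(\X)=\argmin_{\A}\{\gamma_1\nu\|\A\|_*+\tfrac12\|\A-\X\|_F^2\}$ can be taken to share the singular vectors of $\X$, with its singular values obtained by applying scalar soft-thresholding at level $\gamma_1\nu$ to those of $\X$. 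Writing the SVD $\X=\U\bSigma\V^\top$ with $\bSigma=\Diag(\sigma_i)$, this gives the singular value thresholding formula $\prox^{\nu}_{\gamma_1\|\cdot\|_*}(\X)=\U\,\Diag\big([\sigma_i-\gamma_1\nu]_+\big)\,\V^\top$.

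Next I would subtract and exploit that $\U$ and $\V$ have orthonormal columns, so left- and right-multiplication by them leaves the spectral norm unchanged: $\|\X-\prox^{\nu}_{\gamma_1\|\cdot\|_*}(\X)\|_2=\big\|\bSigma-\Diag([\sigma_i-\gamma_1\nu]_+)\big\|_2=\max_i\big|\sigma_i-[\sigma_i-\gamma_1\nu]_+\big|$, the last equality because the bracketed matrix is diagonal. Finally, a two-line case split finishes it: since each $\sigma_i\ge 0$, if $\sigma_i>\gamma_1\nu$ then $[\sigma_i-\gamma_1\nu]_+=\sigma_i-\gamma_1\nu$ and the difference equals $\gamma_1\nu$, while if $\sigma_i\le\gamma_1\nu$ then $[\sigma_i-\gamma_1\nu]_+=0$ and the difference equals $\sigma_i\le\gamma_1\nu$; in both cases $|\sigma_i-[\sigma_i-\gamma_1\nu]_+|=\min\{\sigma_i,\gamma_1\nu\}\le\gamma_1\nu$, so the maximum over $i$ is at most $\gamma_1\nu$, which is the claimed bound.

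The only delicate point I anticipate is the first step, namely justifying the closed form $\prox^{\nu}_{\gamma_1\|\cdot\|_*}=\mathrm{SVT}_{\gamma_1\nu}$. Unlike the elementwise soft-thresholding used for $f_1$, this is not a direct minimization: it relies on the structural fact that the nuclear norm is a spectral (absolutely symmetric, unitarily invariant) function, so its proximal map acts coordinatewise on singular values, which in turn uses Von Neumann's inequality to decouple the cross term $\langle\A,\X\rangle$. Once that structural fact is in hand, the remaining estimates are elementary and mirror the argument of Lemma~\ref{lem:prox_f1}.
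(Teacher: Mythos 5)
Your proposal is correct and follows essentially the same route as the paper: identify $\prox^{\nu}_{\gamma_1\|\cdot\|_*}$ with singular value thresholding via Von Neumann's inequality, use unitary invariance of the spectral norm to reduce to $\max_i\big|\sigma_i-[\sigma_i-\gamma_1\nu]_+\big|$, and finish with the same two-case scalar estimate. Your observation that the difference equals $\min\{\sigma_i,\gamma_1\nu\}$ is a slightly cleaner way of packaging the case split, but the argument is otherwise identical to the paper's proof.
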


{Finally, we characterize the smoothness of the data-fidelity term, which determines the Lipschitz constants entering the descent analysis.}

\begin{lemma}\label{lem:H_bismooth2}
For $H:\IRmr\times\IRrn\to\IR$ defined as $\frac{1}{2}\|\M-\X\Y\|_F^2$, the bi-smooth constant wrt $\X$ is $ L_1 = \|\Y\Y^\top \|_2$ and wrt $\Y$ is $ L_2 = \|\X^\top\X \|_2$.
\begin{proof}
{We have} $ L_1 = \|\nabla_{\X}^2 H(\X, \Y)\| = \|\Y\Y^\top \|_2$, the spectral norm of $\Y\Y^\top$,
and $L_2 = \|\nabla_{\Y}^2 H(\X, \Y)\| = \|\X^\top\X \|_2$.
\end{proof}
\end{lemma}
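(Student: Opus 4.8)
The plan is to write down the two block gradients of $H$ in closed form and observe that each is an \emph{affine} map of the block it differentiates; the Lipschitz modulus of that block gradient is then exactly the operator norm of its (constant) linear part, which is precisely the claimed Hessian.

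First I would fix $\Y\in\IRrn$ and differentiate $\X\mapsto H(\X,\Y)=\tfrac12\|\M-\X\Y\|_F^2$. With the residual $\R=\X\Y-\M$ and the Frobenius inner product $\langle\A,\B\rangle=\tr(\A^\top\B)$, one gets $\nabla_{\X}H(\X,\Y)=\R\Y^\top=\X\big(\Y\Y^\top\big)-\M\Y^\top$, which is affine in $\X$. Hence for all $\X_1,\X_2\in\IRmr$,
\[
\nabla_{\X}H(\X_1,\Y)-\nabla_{\X}H(\X_2,\Y)=(\X_1-\X_2)\,\Y\Y^\top,
\]
and the mixed-norm bound $\|\A\B\|_F\le\|\A\|_F\|\B\|_2$ with $\A=\X_1-\X_2$ and $\B=\Y\Y^\top$ gives $\|\nabla_{\X}H(\X_1,\Y)-\nabla_{\X}H(\X_2,\Y)\|_F\le\|\Y\Y^\top\|_2\,\|\X_1-\X_2\|_F$. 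So the $\X$-block gradient is $\|\Y\Y^\top\|_2$-Lipschitz, that is $L_1=\|\Y\Y^\top\|_2$. Equivalently, the Hessian in $\X$ is the constant linear operator $\Z\mapsto\Z\,\Y\Y^\top$ on $\IRmr$, whose operator norm equals the spectral norm of the PSD matrix $\Y\Y^\top$; since for a $C^2$ block the Lipschitz constant of the block gradient equals the (here constant) operator norm of the block Hessian, the two computations agree, and $L_1=\|\nabla_{\X}^2H\|_2=\|\Y\Y^\top\|_2$.

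The $\Y$-block is handled symmetrically: fixing $\X$ and differentiating, $\nabla_{\Y}H(\X,\Y)=\X^\top\R=(\X^\top\X)\Y-\X^\top\M$ is affine in $\Y$, with increment $(\X^\top\X)(\Y_1-\Y_2)$; the companion bound $\|\A\B\|_F\le\|\A\|_2\|\B\|_F$ with $\A=\X^\top\X$ yields Lipschitz modulus $L_2=\|\X^\top\X\|_2$, matching the Hessian operator $\Z\mapsto(\X^\top\X)\,\Z$ on $\IRrn$. I do not anticipate a genuine obstacle here; the only delicate point is to use the correct one of the two submultiplicative inequalities in each case — spectral norm on the fixed matrix ($\Y\Y^\top$, or $\X^\top\X$ for the $\Y$-block) and Frobenius norm on the increment — since the other pairing yields a looser constant than the stated one. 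It is also worth stressing that $L_1$ and $L_2$ are \emph{iterate-dependent} ($L_1$ depends on the current $\Y$, $L_2$ on the current $\X$), which is exactly the form the PALM-type descent analysis needs and is consistent with Lemma~\ref{lem:H_bismooth}.
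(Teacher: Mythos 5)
Your proof is correct and takes essentially the same route as the paper: the paper simply asserts $L_1=\|\nabla_{\X}^2 H\|=\|\Y\Y^\top\|_2$ and $L_2=\|\nabla_{\Y}^2 H\|=\|\X^\top\X\|_2$, and your argument just makes this explicit by writing each block gradient as an affine map and reading off the Lipschitz modulus of its constant linear part via $\|\A\B\|_F\le\|\A\|_F\|\B\|_2$ (resp.\ $\|\A\|_2\|\B\|_F$). Your added remarks on tightness and on the iterate-dependence of $L_1,L_2$ are accurate and consistent with how the constants are used later in the paper.
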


\subsection{Finding the constant $\alpha_k$}\label{subsec:bounds_constant_alpha}
Here, the results are {stated} for any $\X$ and $\Y$ according to the iteration structure in Algorithm~\ref{alg:binno4slrf}.
To find {the} constants in \eqref{conditions_alpha1} and \eqref{conditions_alpha2},
we need
\[
\begin{array}{lll}
    k_1 = (c_1+L_1)\big(\tfrac{1}{\nu}+L_1\big);
    &&
    q_1 =  \big\langle \partial f_1(\X) + \nabla_{\X}H(\X,\Y), \G^{u}_f(\X)\big\rangle;
    \vspace{1mm}
    \\
    k_2 = (c_2+L_1)\big(\tfrac{1}{\nu}+L_1\big);  
    &&
    q_2 = \big\langle \partial f_2(\X) + \nabla_{\X}H(\X,\Y), \G^{\ell}_f(\X) \big\rangle;
\end{array}
\]
where $c_1,c_2$ are as in Lemma~\ref{lem:subdiffbound} for $f_1$ and $f_2$ respectively, and $L_1$ is the bi-smooth constant for $H$ wrt $\X$.
{In particular,} $c_1= \lambda_1\sqrt{mr}$ {by} \cref{prop:c1}, $c_2=2\gamma_1$ {by} \cref{prop:c2}, and $L_1 = \|\Y\Y^\top\|_2$ {by} \cref{lem:H_bismooth2}.
{Bounds} for $q_1$ and $q_2$ {are given by the following proposition}.
\begin{proposition}\label{prop:l1}
For $q_1 = \langle \partial f_1 (\X) + \nabla_{\X} H(\X, \Y), \G^{u}_f(\X) \rangle$, we have that 
$q_1 \leq (\lambda_1\sqrt{mr}+\|\Y\Y^\top \|_2)^2$.
For $q_2$ = $\langle \partial f_2 (\X) + \nabla_{\X} H(\X, \Y), \G^{\ell}_f(\X) \rangle$, we have that $ q_2 \leq (\gamma_1+\|\Y\Y^\top\|_2)(2\gamma_1+\|\Y\Y^\top\|_2)$.
\begin{proof}
\[
\begin{array}{rcl}
q_1
&\leq&
\big| \langle \partial f_1 (\X), \G^{u}_f(\X) \rangle\big| + \big| \langle \nabla_{\X} H( \X, \Y), \G^{u}_f(\X) \rangle\big|
\\
&\overset{\cref{lem:subdiffbound},\cref{lem:H_bismooth}}{\leq}&
(c_1 + L_1 ) \|\G^{u}_f(\X)\|_2
\\
&\overset{\cref{lem:g}}{\leq}&
(c_1+L_1)\Big(\tfrac{1}{\nu} \big\|\X-\prox^{\nu }_{\lambda_1\|\cdot\|_1}(\X)\big\|_2 + L_1\Big)
\\
&\overset{\cref{lem:prox_f1}}{\leq}&
(c_1 + L_1)(\lambda_1\sqrt{mr}+L_1) 
\\
&\overset{\cref{prop:c1}}{=}&
(c_1 + L_1)^2
\overset{\cref{lem:H_bismooth2}}{=}
\big(\lambda_1\sqrt{mr}+\|\Y\Y^\top \|_2 \big)^2.
\end{array}
\]
\[
\begin{array}{rcl}
q_2 
&\leq&
| \langle \partial f_2 (\X), \G^{\ell}_f(\X) \rangle| + | \langle \nabla_{\X} H( \X, \Y), \G^{\ell}_f(\X) \rangle|
\\
&\overset{\cref{lem:subdiffbound},\cref{lem:H_bismooth}}{\leq}&
(c_2+ L_1 ) \|\G^{\ell}_f(\X)\|_2
\\
&\overset{\cref{lem:g}}{\leq}&
(c_2+L_1)\Big(\tfrac{1}{\nu}
\big\|\X-\prox^{\nu}_{\gamma_1\|\cdot\|_*}(\X)\big\|_2 + L_1\Big)
\\
&\overset{\cref{lem:prox_f2}}{\leq}& 
(c_2 + L_1)(\gamma_1+L_1) 
\overset{\cref{prop:c2}}{=}
(2\gamma_1+L_1)(\gamma_1+L_1)
\\
&\overset{\cref{lem:H_bismooth2}}{=}&
(\gamma_1+\|\Y\Y^\top\|_2)(2\gamma_1+\|\Y\Y^\top\|_2).
\end{array}
\]
\end{proof}
\end{proposition}

{To guarantee that the descent conditions admit feasible parameters, we need to ensure that the interval defined by \eqref{conditions_alpha1}--\eqref{conditions_alpha2} is non-empty. The following proposition serves as sufficient condition for this aim.}
{
\begin{proposition}[Non-empty Interval]\label{prop:feas}
Let $c,L,L',\mu>0$.
The interval
\[
\left[
\frac{\tfrac{1}{\nu}+L}{c+2L+\tfrac{1}{\nu}}, ~ \frac{\mu+L'}{\mu+2L'+\tfrac{1}{\nu}}
\right]
~\subseteq~ [0,1] 
\]
is non-empty iff $\nu \geq \tfrac{1}{\sqrt{(L+\mu)(L+c)}-L}$ 
(up to the generalization when $L$ differs between numerator and denominator).
\end{proposition}
\begin{proof}
First we show the interval sits in $[0,1]$, so non-emptiness is the only issue. 
Write $z := 1/\nu > 0$.

\emph{Endpoints lie in $[0,1]$.} Both numerators and denominators are positive, so the endpoints are positive. For the upper bound of 1:
\[
\frac{z+L}{c+2L+z} \leq 1 \iff z+L \leq c+2L+z \iff 0 \leq c+L, 
\]
\[
\frac{\mu+L'}{\mu+2L'+z} \leq 1 \iff \mu+L' \leq \mu+2L'+z \iff 0 \leq L'+z,
\]
both of which hold.

\emph{Non-emptiness.} The lower endpoint does not exceed the upper iff
\[
\frac{z+L}{c+2L+z} \;\leq\; \frac{\mu+L'}{\mu+2L'+z}.
\]
Cross-multiplying (denominators positive) and expanding, the terms $z\mu$ and $2LL'$ cancel across sides and we obtain
\begin{equation}\label{eq:feas_quadratic}
z^2 + (L+L')\,z - \big(L\mu + \mu c + L'c\big) \leq 0.
\end{equation}
This quadratic in $z$ has discriminant $(L+L')^2 + 4(L\mu+\mu c+L'c) > 0$, hence a unique positive root
\[
z^\star = \tfrac{1}{2}\Big[\sqrt{(L+L')^2 + 4(L\mu+\mu c+L'c)} - (L+L')\Big],
\]
and \eqref{eq:feas_quadratic} holds iff $z \leq z^\star$, i.e.\ $\nu \geq 1/z^\star$.

When $L=L'$, by the identity $L\mu + \mu c + Lc = (L+\mu)(L+c) - L^2$, rewrite \eqref{eq:feas_quadratic} as $(z+L)^2 \leq (L+\mu)(L+c)$, giving $z \leq \sqrt{(L+\mu)(L+c)} - L$, i.e.\ $\nu \geq 1/\big(\sqrt{(L+\mu)(L+c)} - L\big)$.
\end{proof}

Now we make use of Non-empty Interval to show that stepsizes for \texttt{Binno} exist.

\begin{theorem}\label{th:alpha_range}
Let the assumptions of Theorem~\ref{th:descend_condition} hold and consider Problem~\eqref{eq:SLRF}. 
If the stepsize $\nu$ satisfies
\[
\nu \;\geq\; \dfrac{1}{\sqrt{(\|\Y\Y^\top\|_2+\gamma_1)(\|\Y\Y^\top\|_2+\lambda_1\sqrt{mr})} - \|\Y\Y^\top\|_2},
\]
then the set of $\alpha_k\in[0,1]$ satisfying conditions \eqref{conditions_alpha1}--\eqref{conditions_alpha2} is non-empty, and every such $\alpha_k$ lies in
\[
0\leq \dfrac{\tfrac{1}{\nu}+\|\Y\Y^\top\|_2}{\lambda_1\sqrt{mr}+\tfrac{1}{\nu}+2\|\Y\Y^\top\|_2}\leq \alpha_k\leq 
\dfrac{\gamma_1+\|\Y\Y^\top\|_2}{\gamma_1+\tfrac{1}{\nu}+2\|\Y\Y^\top\|_2}
\leq 1.
\]
\end{theorem}
}

\begin{proof}
{Conditions \eqref{conditions_alpha1}--\eqref{conditions_alpha2} give into $k_1/(q_1+k_1) \leq \alpha_k \leq q_2/(q_2+k_2)$. Substituting the expressions for $q_i,k_i$ from Section~\ref{subsec:bounds_constant_alpha} and Proposition~\ref{prop:l1} gives exactly the claimed interval, with
\[
c = \lambda_1\sqrt{mr},~~ \mu = \gamma_1,~~ L = L' = \|\Y\Y^\top\|_2
\]
in the notation of Proposition~\ref{prop:feas}.}
\end{proof}

{
\begin{remark}{(Numerical stability)}
The denominator $\sqrt{(a+c)(a+b)}-a$ in Theorem~\ref{th:alpha_range} can suffer from catastrophic cancellation when $a$ dominates. 
To prevent it, we consider 
$\tfrac{1}{\sqrt{(a+c)(a+b)}-a} = \tfrac{\sqrt{(a+c)(a+b)}+a}{ab+ac+bc}$
which we use in the implementation. 
\end{remark}
}

\subsection{Finding the constant $\beta_k$}\label{bounds_constant_beta}
Similar to $\alpha_k$, {we derive the following for} $\beta_k$.
To find {the} constants in \eqref{conditions_beta1} and \eqref{conditions_beta2},
we need
\[
\begin{array}{lll}
    k_3 = \big( c_3+L_2(\X_u) \big)\big(\tfrac{1}{\nu}+L_2(\X_u)\big), 
    &\quad
    q_3 =   \big\langle \partial g_1(\Y) + \nabla_{\Y}H(\X_u,\Y), \G^u_g(\Y)\big\rangle;   
    \\
    k_4 = \big(c_4+L_2(\X_{\ell})\big)\big(\tfrac{1}{\nu}+L_2(\X_{\ell})\big),
    &\quad
    q_4 =  \big\langle \partial g_2(\Y) + \nabla_{\Y}H(\X_{\ell},\Y), \G^{\ell}_{g}(\Y)\big\rangle;
\end{array}
\]
where $c_3,c_4$ {are the} constants for $g_1$ and $g_2$ respectively as in \cref{lem:subdiffbound}, and $L_2(\X_{\Delta})$ is the bi-smooth constant for $H$ wrt $\Y$ computed {at} $\X_{\Delta}\in\{\X_u,\X_{\ell}\}$.
{In particular,} $c_3=\lambda_2\sqrt{rn}$ by \cref{prop:c1}, $c_4=2\gamma_2$ by \cref{prop:c2}, and $L_2 = \|\X_\Delta\X_\Delta^\top\|_2$ by \cref{lem:H_bismooth2}.
{We drop subscripts where unnecessary, to avoid confusion}.

{We now bound $q_3$ and $q_4$}.
\begin{proposition}\label{prop:l3}
For $q_3$ = $\big\langle \partial g_1(\Y) + \nabla_{\Y}H(\X_u,\Y), \G^u_g(\Y)\big\rangle$,
we have that 
$q_3 \leq (\lambda_2\sqrt{rn}+\|\X_u^\top\X_u \|_2)^2$.
For $q_4$ = $\langle \partial g_2 (\Y) + \nabla_{\Y} H(\X_{\ell}, \Y), \G^{\ell}_{g}(\Y) \rangle$,
we have that 
$q_4 \leq (2\gamma_2+\|\X_{\ell}^{\top}\X_{\ell}\|_2)(\gamma_2+\|\X_{\ell}^{\top}\X_{\ell}\|_2)$.
\begin{proof}
\[
\begin{array}{lcl}
q_3 
&\leq&
\big| \big\langle \partial g_1 (\Y), \G^u_g(\Y) \big\rangle \big| + \big| \big\langle \nabla_Y H( \X_u, \Y), \G^u_g(\Y) \big\rangle\big|
\\
&\overset{\cref{lem:subdiffbound},\cref{lem:H_bismooth2}}{\leq}&
(c_3 + L_2(\X_u)) \|\G_u (\Y_k)\|_2
\\
&\overset{\cref{lem:g}}{\leq}&
(c_3+L_2(\X_u))
\big(\tfrac{1}{\nu} \big\|\Y_k-\prox^{\nu}_{\|\cdot\|_1}(\Y_k)\big\|_2 + L_2(\X_u)\big)
\\
&\overset{\cref{lem:prox_f1}}{\leq}& (c_3 + L_2(\X_u))(\lambda_2\sqrt{rn}+L_2(\X_u)) 
\\
&\overset{\cref{prop:c1}}{=}&
(c_3 + L_2(\X_u))^2
\overset{\cref{lem:H_bismooth2}}{\leq} (\lambda_2\sqrt{rn}+\|\X_u^\top\X_u \|_2)^2.
\end{array}
\]
\[
\begin{array}{lcl}
q_4 
&\leq&
\big| \big\langle \partial g_2 (\Y), \G^{\ell}_{g}(\Y)  \big\rangle\big| + \big| \big\langle \nabla_Y H( \X_{\ell}, \Y), \G^{\ell}_{g}(\Y)  \big\rangle\big|
\\
&\overset{\cref{lem:subdiffbound},\cref{lem:H_bismooth2}}{\leq}&
(c_4 + L_2(\X_{\ell})) \|\G^{\ell}_{g}(\Y) \|_2
\\
&\overset{\cref{lem:g}}{\leq}&
(c_4+L_2(\X_{\ell}))\big(\tfrac{1}{\nu}
\big\|\Y-\prox^{\nu}_{\|\cdot\|_*}(\Y)\big\|_2 + L_2(\X_{\ell})\big)
\\
&\overset{\cref{lem:prox_f2}}{\leq}& 
(c_4 + L_2(\X_{\ell}))(\gamma_2 + L_2(\X_{\ell}))
\\
&\overset{\cref{prop:c2}}{\leq}&
(2\gamma_2 +L_2(\X_{\ell}))
(\gamma_2+L_2(\X_{\ell}))
\\
&\overset{\cref{lem:H_bismooth2}}{=}&
(2\gamma_2 + \|\X_{\ell}^\top\X_{\ell}\|_2)
(\gamma_2+ \|\X_{\ell}^\top\X_{\ell}\|_2).
\end{array}
\]
\end{proof}
\end{proposition}

{
\begin{theorem}\label{th:beta_range}
Let the assumptions of Theorem~\ref{th:descend_condition} hold and consider Problem~\eqref{eq:SLRF}.
Let $N = \|\X_{\ell}^\top\X_{\ell}\|_2+\|\X_u^\top\X_u\|_2$. 
If the stepsize $\nu$ satisfies
\[
\nu
\;\geq\; 
\dfrac{2}{
\sqrt{N^2
+4\big(\lambda_2\gamma_2\sqrt{rn}+\gamma_2\|\X_u^\top\X_u\|_2+\|\X_{\ell}^\top\X_{\ell}\|_2\lambda_2\sqrt{rn}\big)}-N},
\]
then the set of $\beta_k\in[0,1]$ satisfying conditions \eqref{conditions_beta1}--\eqref{conditions_beta2} is non-empty, and every such $\beta_k$ lies in
\[
0\leq \dfrac{\tfrac{1}{\nu}+\|\X_u^\top\X_u\|_2}{\lambda_2\sqrt{rn}+\tfrac{1}{\nu}+2\|\X_u^\top\X_u\|_2}\leq \beta_k\leq 
\dfrac{\gamma_2+\|\X_{\ell}^\top\X_{\ell}\|_2}{\gamma_2+\tfrac{1}{\nu}+2\|\X_{\ell}^\top\X_{\ell}\|_2}\leq 1.
\]
\end{theorem}
}
\begin{proof}
{
Conditions \eqref{conditions_beta1}--\eqref{conditions_beta2} give into $k_3/(q_3+k_3) \leq \beta_k \leq q_4/(q_4+k_4)$. Substituting the expressions for $q_i,k_i$ from Section~\ref{bounds_constant_beta} and Proposition~\ref{prop:l3} gives the claimed interval, with
\[
c = \lambda_2\sqrt{rn},~ \mu=\gamma_2,~ L=\|\X_u^\top\X_u\|_2,~ L'=\|\X_{\ell}^\top\X_{\ell}\|_2
\]
in the notation of Proposition~\ref{prop:feas}. 
Applying the case ($L\neq L'$) with $L+L'=N$ yields the stated lower bound on $\nu$.
}
\end{proof}

{\begin{remark}
The hyperparameters ($\lambda_1,\lambda_2,\gamma_1,\gamma_2$) weight the regularization terms {associated with the constraints on} the factor matrices, and {thus} control the trade-off between data fidelity and structural regularization; setting all {to} $1$ would {generally be} arbitrary, since the corresponding terms may have different roles and scales. {Clearly}, hyperparameter tuning is a crucial issue and deserves careful consideration. In principle, this step could also be addressed {via} nested bi-level or, more generally, multi-level optimization strategies \cite{Selicato2025,SELICATO2026117316}.
\end{remark}}

{
\section{Experimental Validation of \texttt{Binno} on SLRF}\label{sec:exp}
We evaluate \texttt{Binno} on synthetic and real data, assessing accuracy, 
fidelity, and efficiency.\footnote{All experiments run in MATLAB~2024b on an Intel~i7 
octa-core workstation with 16GB RAM.
Code available at 
\url{https://github.com/flaespo/Binno.git}.}
Since problem~\eqref{eq:SLRF} is nonconvex, all compared methods can only 
be expected to reach local optima; the evaluation therefore concerns the 
quality of the recovered solutions rather than global optimality.

\paragraph{Baselines}
We compare \texttt{Binno} against two kinds of baselines:
\begin{itemize}[leftmargin=15pt,itemsep=0pt]
    \item \emph{Task-specific SLRF solvers.} \texttt{NMFLS}\cite{Ji:2013:EMNLP}, 
    a nonnegative matrix factorization routine based on multiplicative updates~\cite{Lee1999}; and 
    \texttt{NSA}\cite{NSA}, an ADMM scheme~\cite{Bauschke2017,lin2022alternating} for robust principal component pursuit, combining a partial SVD for the 
    low-rank update with entrywise soft-thresholding for the sparse update. 
    We use two implementations, \texttt{NSA-v1} and \texttt{NSA-v2}, which 
    share the core iteration and stopping rule but differ in parameterization 
    and in the warm start of the partial 
    SVD~\cite{AybatGoldfarbMa2014,AybatIyengar2015}.
    \item \emph{Bi-level baseline.} \texttt{BiG-SAM}\cite{sabach2017first}, 
    a 1st-order bi-level method for smooth-plus-convex problems, adapted 
    here to~\eqref{eq:SLRF}.
\end{itemize}
The choice of baselines is deliberate: \texttt{NMFLS} and \texttt{NSA} are 
established solvers for sparse-plus-low-rank decomposition, while 
\texttt{BiG-SAM} is the closest available bi-level method. 
Most other 1st-order bi-level solvers require a smooth or strongly convex upper level and thus do not apply to~\eqref{eq:SLRF} without reformulation. 
Implementations of \texttt{NMFLS} and \texttt{NSA} are taken from 
LRSLibrary\cite{lrslibrary2015}.\footnote{%
\url{https://github.com/andrewssobral/lrslibrary/tree/master/algorithms}}

\paragraph{Datasets} The synthetic dataset is generated
$\M = \X_\star \Y_\star + \N \in \IR^{m\times n}$, where 
$\X_\star \in \IR^{m\times r}$ and $\Y_\star \in \IR^{r\times n}$ have i.i.d.\ 
standard normal entries on a sparse support, and $\N$ is entrywise Gaussian 
with standard deviation $\sigma = 0.01$. Unless stated otherwise, $m = 100$ 
and $n = 80$. The rank $r$ is treated as a design parameter: in 
\cref{subsubsec:synth_fixed} we fix $r = 5$ with $\approx 30\%$ nonzero 
entries in the factors; in \cref{subsubsec:synth_sweep} we vary 
$r \in [2,30]$ with $10$ random initializations per rank to probe robustness.

For real data, we use the Seattle traffic-video 
database~\cite{1467355,chan2005classification}, consisting of 254 video sequences of highway traffic from a single stationary camera (42--52 frames per clip at 10fps)\footnote{%
\url{https://github.com/andrewssobral/lrslibrary/tree/master/dataset/trafficdb}}. Each clip is converted to grayscale, resized to $80\times 60$ pixels, and cropped to a $48\times 48$ window over the most active region. The fixed viewpoint makes the sequences suitable for background-foreground separation via low-rank plus sparse modeling, with $r = 5$ for all clips.

\paragraph{Setup}
All methods share the same initialization $(\X_0,\Y_0)$ drawn entrywise 
from a standard Gaussian, and terminate once a fixed maximum number of iterations is reached\footnote{Ee.g. 1000 on synthetic data.}. 
For \texttt{Binno} we set $\lambda_1 = \lambda_2 = 10^{-2}$ at the upper level 
and $\gamma_1 = \gamma_2 = 5{\cdot}10^{-2}$ at the lower level.
These  coefficients weight the $\ell_1$ and nuclear-norm penalties respectively, 
which act on blocks of different numerical scale; setting them equal across 
levels is therefore not a neutral choice. The values above were obtained by 
a coarse grid search and produced stable recovery across all ranks tested; 
we observed that \texttt{Binno}'s output is robust to perturbations within one order of magnitude around these values. 
Baselines use their published default 
parameters.

\paragraph{Metrics}
We report the relative reconstruction error, the peak signal-to-noise ratio 
(PSNR), and the CPU time. Given the iterate $(\X,\Y)$, the recovered 
low-rank component is $\L = \X\Y$ and the sparse component, capturing 
structured noise or moving foreground, is the residual $\S = \M - \L$. 
The Frobenius-norm relative error $\lVert \M - \L \rVert_F / \lVert \M \rVert_F$
is standard in low-rank modeling and RPCA~\cite{HalkoMartinssonTropp2011}. 
PSNR is computed from the mean-squared error 
between reference and reconstruction as
$\mathrm{PSNR} = 10 \log_{10} \!\left(\mathrm{MAX}^2 /\mathrm{MSE}\right)$,
where $\mathrm{MAX}$ is the peak representable value in the image~\cite{WangBovik2006}.

\subsection{Results on synthetic data}\label{subsec:res_synth}

\subsubsection{Fixed-rank comparison against task-specific solvers}
\label{subsubsec:synth_fixed}
We perform a simple toy testing with rank $r=5$. 
Results in 
\cref{tab:synt} shows that \texttt{Binno} attains the lowest reconstruction error by 
roughly an order of magnitude at a competitive runtime.
\begin{table*}[h!]
\caption{Evaluated metrics on synthetic data, $r=5$.}
\label{tab:synt}
\centering
\begin{tabular}{r|cccc}
                & \texttt{Binno} & \texttt{NMFLS} & \texttt{NSA-v1} & \texttt{NSA-v2} \\
\hline
Time            & $0.093$        & $0.102$        & $0.169$         & $0.065$         \\
Reconstruction error 
                & $0.0135$       & $1.0412$       & $0.3259$        & $0.3259$
\end{tabular}
\end{table*}

\subsubsection{Rank sensitivity against the bi-level baseline}
\label{subsubsec:synth_sweep}
We now assess robustness to the target rank by varying $r \in [2,30]$ with 
$10$ random initializations per rank, and compare \texttt{Binno} with 
\texttt{BiG-SAM}.

\begin{remark}
\texttt{BiG-SAM} is designed for smooth-plus-convex bi-level problems 
(\cref{sec:background}), a setting strictly less general than SLRF. 
The comparison is intended to quantify the effect of \texttt{Binno}'s 
coupled descent mechanism relative to the closest applicable bi-level 
baseline, rather than to claim that \texttt{BiG-SAM} is the appropriate 
solver for this nonconvex problem.
\end{remark}

\begin{figure*}[h!]
\centering\includegraphics[width=\textwidth]{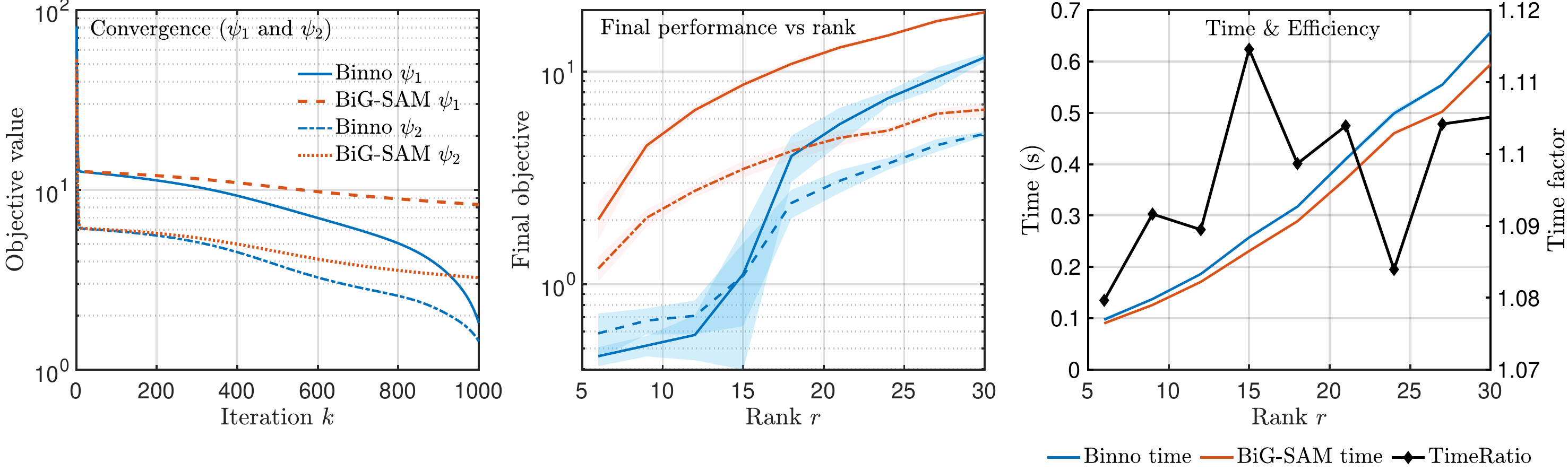}
\caption{\textbf{Left:} convergence history on a single instance at $r=15$, 
showing the joint minimization of the upper-level ($\psi_1$) and lower-level 
($\psi_2$) objectives. \textbf{Middle:} mean performance over 10 random 
trials per rank; shaded regions denote $\pm 1$ standard deviation. 
\textbf{Right:} dual-axis plot of absolute CPU time and the time factor 
of \texttt{Binno} over \texttt{BiG-SAM}.}
\label{fig:vsBigSam}
\end{figure*}

As shown in Fig.~\ref{fig:vsBigSam} (left), \texttt{Binno} exhibits a markedly 
steeper simultaneous descent of $\psi_1$ and $\psi_2$; the upper-level 
objective, which governs sparsity of the factors, plateaus substantially 
earlier than under \texttt{BiG-SAM}. The statistical summary (middle) 
confirms that this advantage is consistent across all ranks, with narrow 
variance bands indicating that \texttt{Binno} is robust to random 
initialization, whereas \texttt{BiG-SAM} displays higher variance and  
degrades in accuracy as $r$ grows. 
Both methods scale roughly linearly in 
$r$, but \texttt{Binno} incurs only a tiny runtime overhead (right), due to the additional scalar computations for \eqref{conditions_alpha1}-\eqref{conditions_beta2}. It should be noted that, given the nonconvex nature of the problem, the solutions reached by the compared solvers are local optima. The fact that \texttt{Binno} achieves a lower Relative Reconstruction Error suggests that its descent-driven coupling is particularly effective at navigating the nonsmooth landscape of matrix factorization, avoiding some of the poor local minima where standard methods might stagnate.

\subsection{Results on real-world video}\label{subsec:res_real}
Fig.~\ref{fig:qualitative} reports the convergence of $\psi_1$, $\psi_2$ on a 
representative clip; as in the synthetic case, both objectives decrease 
monotonically, consistent with the descent safeguards built into the algorithm (proximal-gradient blocks and calibrated averaging).

Fig.~\ref{fig:qualitative} shows an original clip with its noisy observation 
together with the recovered low-rank component $\L$ and sparse component 
$\S$ for each method; Fig.~\ref{fig:qualitative6} displays the corresponding 
decompositions for six additional clips spanning diverse dynamics. 
Qualitatively, all methods produce visually comparable decompositions.

\begin{figure}[h!]
\centering\includegraphics[width=0.37\textwidth]{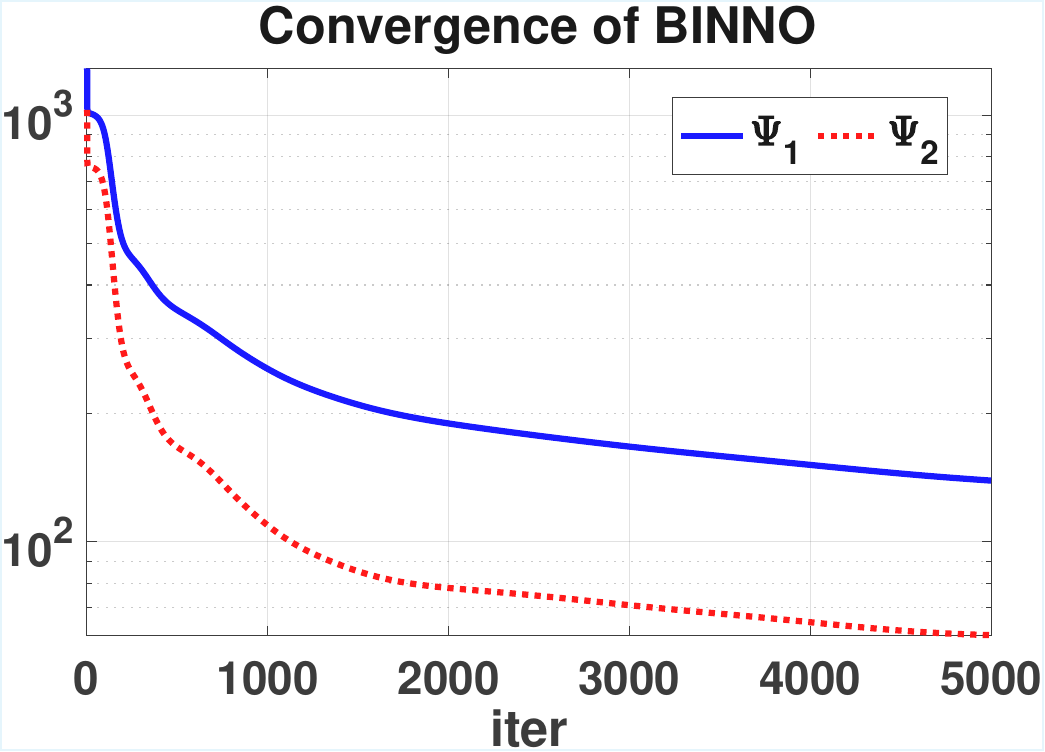}
\includegraphics[width=0.4\textwidth]{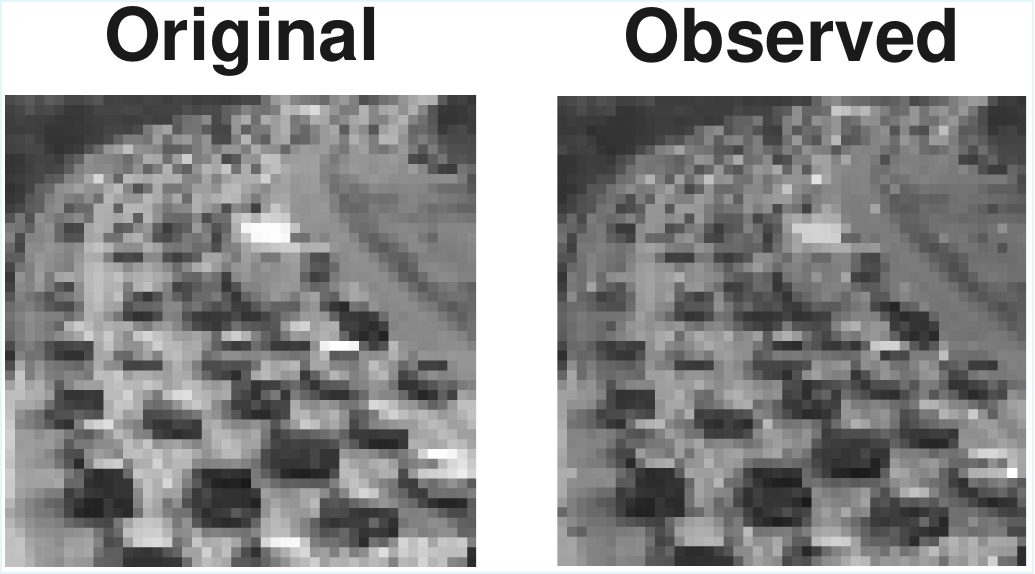}\\
\includegraphics[width=0.66\textwidth]{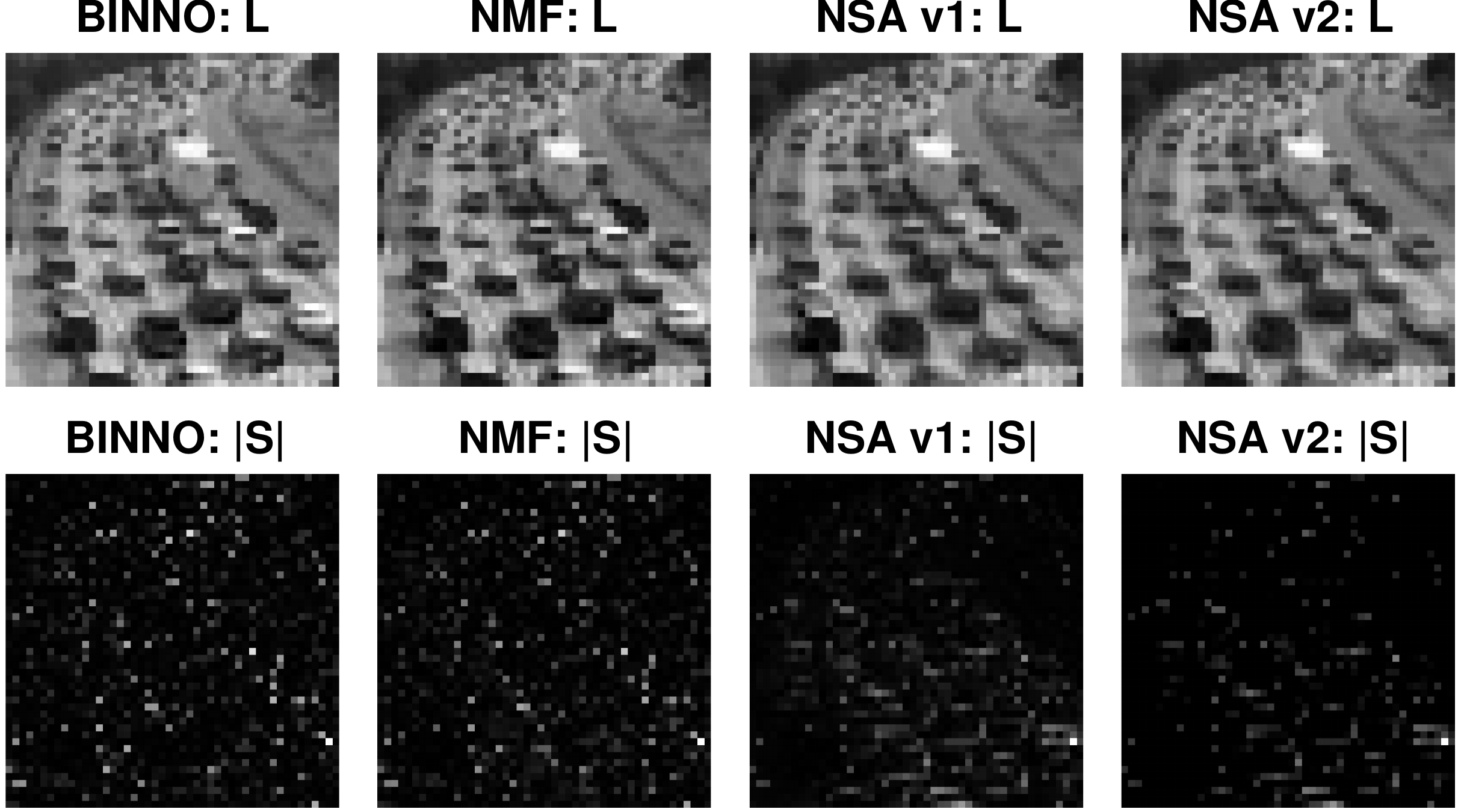}
\caption{
Top row: Convergence of the bi-level objectives on a representative video clip, and a representative clip (clip 5) with its noisy observation; 
Right Bottom two rows: low-rank $\L$ and sparse $\S$ components recovered by each method.}
\label{fig:qualitative}
\end{figure}

\begin{figure}[h!]
\centering\includegraphics[width=0.66\textwidth]{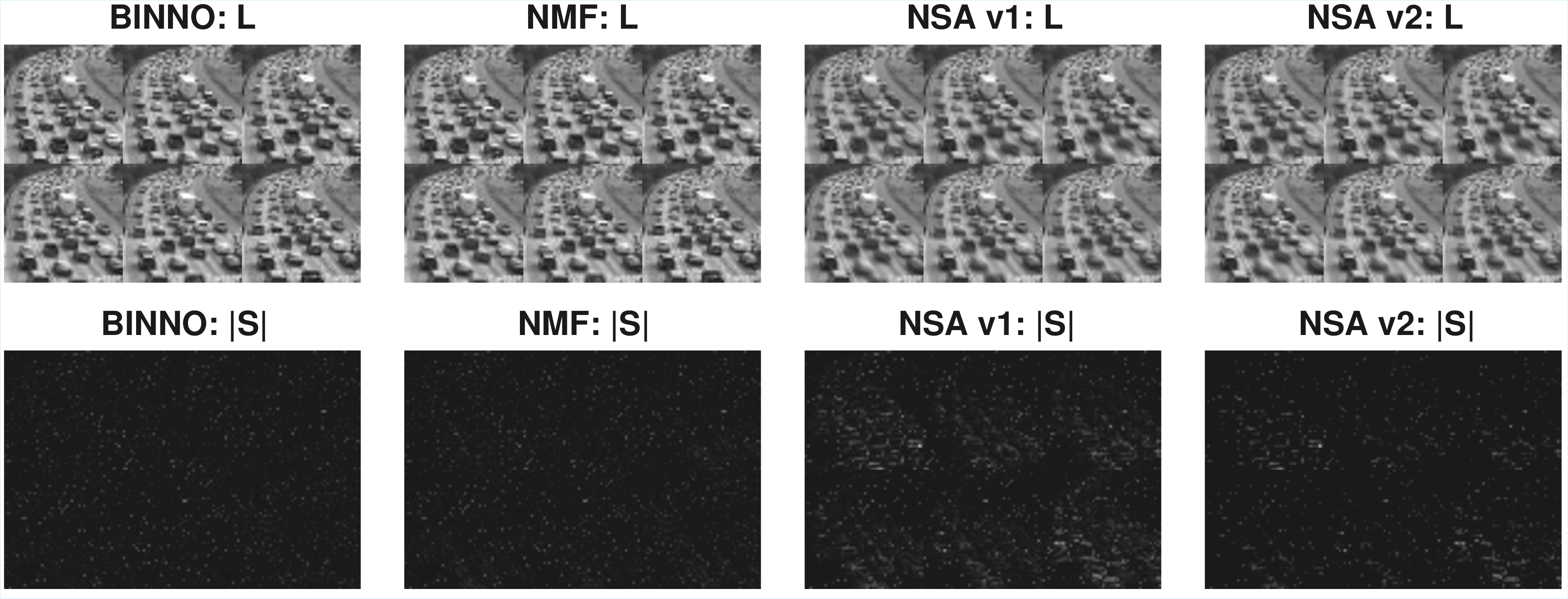}
\caption{Recovered $\L$ and $\S$ for all methods on six additional clips.}
\label{fig:qualitative6}
\end{figure}

Quantitative results are summarized in \cref{tab:comparison} as 
mean$\pm$standard deviation over the 254 clips. \texttt{Binno} attains 
the lowest reconstruction error and the highest PSNR on every evaluated 
sequence, at the cost of a moderate runtime overhead.

\begin{table*}[h!]
\caption{Mean$\pm$std of the evaluated metrics over all 254 video clips.}
\label{tab:comparison}
\centering
\small
\begin{tabular}{c|cccc}
& \texttt{Binno} & \texttt{NMFLS} & \texttt{NSA-v1} & \texttt{NSA-v2} \\
\hline
Time 
& $4.092\!\pm\!0.3615$ 
& $3.0769\!\pm\!1.4457$ 
& $0.1754\!\pm\!0.05$ 
& $0.1903\!\pm\!0.05$ \\
Err 
& $0.0663\!\pm\!0.0012$ 
& $0.0680\!\pm\!0.0025$ 
& $0.0982\!\pm\!0.0094$ 
& $0.0982\!\pm\!0.0094$ \\
PSNR 
& $35.16\!\pm\!1.36$ 
& $34.27\!\pm\!1.34$ 
& $28.43\!\pm\!1.53$ 
& $28.43\!\pm\!1.53$
\end{tabular}
\end{table*}
}

{
\section{Application to Regularized Market Clearing}\label{sec:RMC}
As bi-level optimization are closely related to two-player games, we apply \texttt{Binno} to economics of regularized market clearing (RMC): the lower level fixes the set of market equilibria, and the upper level picks, among them, the one that best meets declared policy objectives.
RMC fits \eqref{eq:Binno} exactly and is distinct from SLRF.

\paragraph{Model}
A market has $m$ goods, $r$ sellers (player 1), and $n$ buyers (player 2). 
Let $\X\in\IR^{m\times r}$ be seller decisions (supply intensities across goods), $\Y\in\IR^{r\times n}$ be buyer allocations (how each seller serves each buyer), and $\M\in\IR^{m\times n}$ be the clearing target (demand profile).
Then $\X\Y$ is the aggregate market outcome, and $H(\X,\Y) = \tfrac{1}{2}\|\X\Y - \M\|_F^2$ is market-clearing residual. 
\begin{itemize}[leftmargin=15pt,itemsep=0pt]
\item Lower level (auction market behaviour): sellers and buyers reach an equilibrium characterized by sparse bidding (each seller supplies few goods, each buyer is matched to few sellers).
\item Upper level (social wealth maximization by policy): the regulator selects among such equilibria a policy-preferred outcome: a \emph{concentrated} (low-rank) production structure on $\X$, to ease regulatory oversight, and a \emph{sparse} allocation $\Y$, for traceability. 
\end{itemize}
RMC written in the form of \eqref{eq:Binno} is thus
\begin{equation}\label{eq:RMC}
\hspace{-5mm}
\begin{array}{cl}
\displaystyle\argmin_{\X\in\IR^{m\times r},\,\Y\in\IR^{r\times n}} \hspace{-3mm}
& \displaystyle \lambda_1\|\X\|_* + \lambda_2\|\Y\|_1 + \tfrac{1}{2}\|\X\Y-\M\|_F^2
\\[1mm]
\st & \hspace{-9mm}\displaystyle
(\X,\Y) \in \hspace{-5mm}\argmin_{\U\in\IR^{m\times r},\,\V\in\IR^{r\times n}}\hspace{-5mm}
\mu_1 \|\U\|_1 + \mu_2 \|\V\|_1 + \tfrac{1}{2}\|\U\V - \M\|_F^2,
\end{array}
\tag{RMC}
\end{equation}
where $\lambda_1,\lambda_2,\mu_1,\mu_2>0$ weight policy stringency and equilibrium regularity, respectively. Compared to \eqref{eq:SLRF}, the lower level here is doubly sparse (two $\ell_1$ blocks), and the policy term $f_1$ promotes low-rank structure instead of sparsity.

\paragraph{Explainability in economics}
RMC means that, among all equilibria in which supply and allocation decisions are sparse, the regulator prefers those in which the seller matrix is explainable by a small number of production archetypes ($\|\X\|_*$ small) and in which each buyer is served by few sellers ($\|\Y\|_1$ small).

\subsection{Numerical experiments}\label{subsec:res_RMC}

\paragraph{Data and Baselines}
We use UCI Online Retail dataset\footnote{\url{https://archive.ics.uci.edu/dataset/352/online+retail}} (on UK retailer transactions during 2010-2011).
We compare \texttt{Binno} (solving \eqref{eq:RMC} directly) to \texttt{BiG-SAM} (adapted as in Section~\ref{subsubsec:synth_sweep}), and also \texttt{Lower-only}: PALM applied to the inner problem alone, returning an arbitrary equilibrium (dependent on initialization). We run it from $10$ random starts to illustrate non-uniqueness.

\paragraph{Metrics}
Besides the clearing residual $\|\X\Y-\M\|_F/\|\M\|_F$, we evaluate \emph{policy quality} through
\begin{itemize}[leftmargin=15pt,itemsep=0pt]
\item \emph{Sparsity} of $\Y$: fraction of entries below $10^{-3}$ in magnitude.
\item \emph{Policy score} $\mathcal{P} = \lambda_1\|\X\|_* + \lambda_2\|\Y\|_1$, which is the upper-level objective restricted to the regularization part.
\end{itemize}
We set $\lambda_1=\lambda_2=10^{-2}$ and $\mu_1=\mu_2=5\cdot 10^{-2}$, consistent with Section~\ref{sec:exp}.

\paragraph{Results and Findings}
By construction, \texttt{Lower-only} returns equilibria with comparable clearing residuals but variable policy scores: across the $10$ starts, $\mathcal{P}$ varies widely while $\|\X\Y-\M\|_F/\|\M\|_F$ is essentially flat. 
This empirically confirms the non-uniqueness that motivates the bi-level formulation. \texttt{Binno}, by contrast, attains the same residual within the \texttt{Lower-only} range while dominating every \texttt{Lower-only} trial on $\mathcal{P}$ and $\|\Y\|_0$. 
\texttt{Binno} requires more runtime, consistent with the observations in Section~\ref{subsubsec:synth_sweep}.

\begin{table}[h!]
\caption{RMC on UCI Online Retail data ($m{=}60$, $n{=}100$, $r{=}10$, $k_\star{=}3$; mean$\pm$std over 10 trials).
All methods (MaxIter = 2000) achieve the same market clearing residue of $0.5380\!\pm\!0.000$ in the end.
}
\label{tab:RMC}
\centering
\small
\begin{tabular}{r|ccc}
 & Policy score $\mathcal{P}$  & Sparsity of $\mathbf{Y}$ & Time (s) \\\hline
\texttt{Binno} & $19.5002\!\pm\!0.3066$ & $0.022\!\pm\!0.004$ & $0.205\!\pm\!0.007$ \\
\texttt{BiG-SAM} &  $21.9248\!\pm\!0.3405$ & $0.026\!\pm\!0.005$ & $0.175\!\pm\!0.007$ \\
\texttt{Lower-only} (PALM) &  $21.9522\!\pm\!0.3372$ & $0.026\!\pm\!0.004$ & $0.131\!\pm\!0.003$ \\
\end{tabular}
\end{table}

\begin{figure}[h!]
\centering
\includegraphics[width=0.55\linewidth]{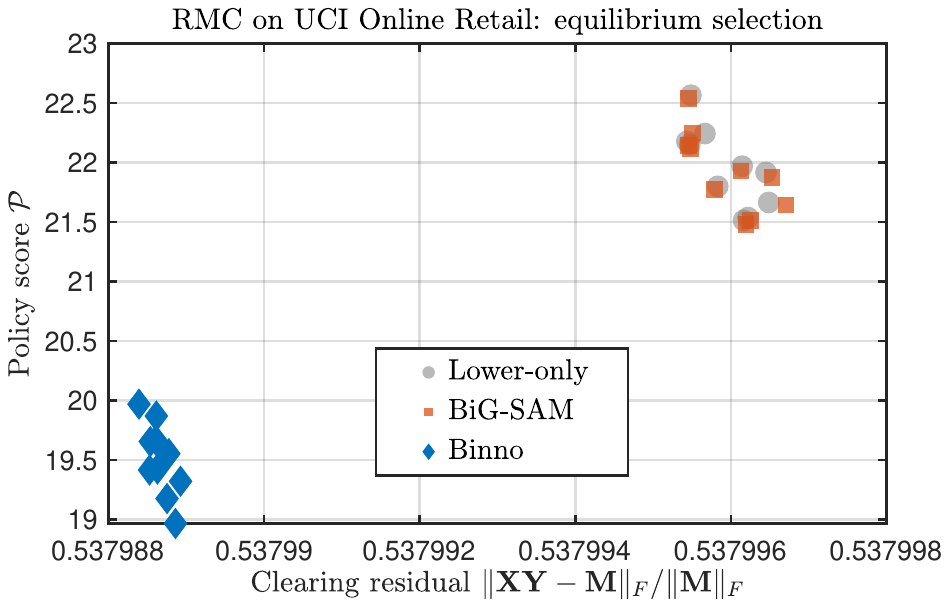}
\caption{The results of the methods at the last iteration for all the 10 trials.
Binno consistently output the lowest policy score $\cP$.}
\label{fig:econ}
\end{figure}

Note that $\mathcal{P}$ is preferably low  for mathematical and regulatory reasons:
\begin{itemize}[leftmargin=15pt,itemsep=0pt]
    \item \textbf{Mathematical Minimization:} The bi-level problem is an $\text{argmin}$ over the upper-level objective.
    So, the algorithm's primary goal is to find an equilibrium $(\mathbf{X}, \mathbf{Y})$ that yields the smallest possible value for $\mathcal{P}$ while satisfying the market-clearing conditions.
    
    \item \textbf{Low-Rank Production ($\|\mathbf{X}\|_*$):} 
    A low $\mathcal{P}$ implies that the complex market of $m$ goods can be explained by a small number of latent ``production archetypes'', making it easier for a regulator to oversee supply patterns.

    \item \textbf{Sparse Allocation ($\|\mathbf{Y}\|_1$):} A low $\mathcal{P}$ ensures that each buyer is served 
    by few sellers rather than a complex network of transactions, improving economic \textit{traceability} and auditability.
\end{itemize}
The result showed that
\texttt{Binno} is able to \textbf{consistently} select a better market equilibrium that minimizes political score, while making explainable economic outcome, among many valid clearing solutions.
}

\section{Conclusion}\label{sec:conc}
In this paper, we propose a new approach {for solving} nonconvex and nonsmooth bi-level optimization problems.
We introduce a novel algorithm, {\texttt{Binno}}, {grounded in} proximal point methods, descent conditions, and variational properties of the involved functions.
This framework allows \texttt{Binno} to preserve the descent property of the overall {solution}.

We also present practical applications {of \texttt{Binno}} to the sparse low-rank approximation problem, which frequently arises in real-world scenarios where {one extracts} meaningful information from large data matrices while maintaining a sparse representation.

Experiments on both synthetic and real datasets demonstrate the effectiveness of \texttt{Binno} compared to several state-of-the-art algorithms {in this field, showing that it outperforms traditional methods. In addition, the comparison with the bi-level baseline and PALM, highlights the advantages of the proposed coupled descent mechanism in nonconvex settings, particularly in terms of robustness and solution quality. The application to regularized market clearing further illustrates the flexibility of the framework, showing that \texttt{Binno} can effectively select economically meaningful equilibria among multiple feasible solutions. These results suggest that Binno provides a general and versatile approach for structured bi-level optimization across different application domains.}

\section*{Acknowledgment \& Funding}
F.E., L.S. are members of the Gruppo Nazionale Calcolo Scientifico - Istituto Nazionale di Alta Matematica (GNCS-INdAM). 
F.E., and L.S. are partially supported by ``INdAM - GNCS Project", CUP: E53C24001950001.\\ 
F.E. are supported by Piano Nazionale di Ripresa e Resilienza (PNRR), Missione 4 ``Istruzione e Ricerca''-Componente C2 Investimento 1.1, ``Fondo per il Programma Nazionale di Ricerca e Progetti di Rilevante Interesse Nazionale'', Progetto PRIN-2022 PNRR, P2022BLN38, Computational approaches for the integration of multi-omics data. CUP: H53D23008870001.

\bibliographystyle{elsarticle-num}
\bibliography{refs} 

\section*{Appendix: RMC in \texttt{Binno}}
RMC is \eqref{eq:Binno} with
$
f_1(\X) = \lambda_1 \|\X\|_*,\ g_1(\Y) = \lambda_2 \|\Y\|_1,\ 
f_2(\U) = \mu_1 \|\U\|_1$, $ g_2(\V) = \mu_2 \|\V\|_1,
$ with $H$ as above. 
All regularizers are convex, proper, l.s.c.\ and have bounded subgradients in the sense of Lemma~\ref{lem:nesterov}, and the assumptions of Theorem~\ref{th:descend_condition} hold.
We have that the $\X$-update is SVT on $\X_u$ and soft-thresholding on $\X_\ell$, 
followed by $\X_{k+1} = \alpha_k \X_u + (1-\alpha_k)\X_\ell$; the $\Y$-update is similar. Running \texttt{Binno} on \eqref{eq:RMC} therefore only requires swapping two prox operators in the \eqref{eq:SLRF} implementation.

The constants in thi setting are for $H$: $L_1 = \|\Y\Y^\top\|_2$ and $L_2 = \|\X^\top\X\|_2$ (Lemma~\ref{lem:H_bismooth2}). 
The subgradient bounds (Propositions~\ref{prop:c1}--\ref{prop:c2}) give
\[
c_1 = 2\lambda_1,\quad c_2 = \mu_1\sqrt{mr},\quad c_3 = \lambda_2\sqrt{rn},\quad c_4 = \mu_2\sqrt{rn}.
\]
Feeding these into Theorems~\ref{th:alpha_range}--\ref{th:beta_range}, gives  feasibility intervals for $\alpha_k,\beta_k$ and a lower bound on $\nu$. For \eqref{eq:RMC} the $\alpha_k$-feasibility condition reads: if 
\[
\nu \;\geq\; \tfrac{1}{\sqrt{(\|\Y\Y^\top\|_2+\mu_1\sqrt{mr})(\|\Y\Y^\top\|_2+2\lambda_1)}-\|\Y\Y^\top\|_2},
\]
then $\alpha_k\in\big[\tfrac{1/\nu+\|\Y\Y^\top\|_2}{2\lambda_1+1/\nu+2\|\Y\Y^\top\|_2},\,\tfrac{\mu_1\sqrt{mr}+\|\Y\Y^\top\|_2}{\mu_1\sqrt{mr}+1/\nu+2\|\Y\Y^\top\|_2}\big]$. The $\beta_k$ bound follows analogously from Theorem~\ref{th:beta_range} with $c_3,c_4$ substituted.

\end{document}